\newcommand{\be}{\begin{equation}}
\newcommand{\en}{\end{equation}}
\newcommand{\bea}{\begin{eqnarray}}
\newcommand{\ena}{\end{eqnarray}}
\newcommand{\beano}{\begin{eqnarray*}}
\newcommand{\enano}{\end{eqnarray*}}
\newcommand{\bee}{\begin{enumerate}}
\newcommand{\ene}{\end{enumerate}}
\newcommand{\mc}{\mathcal}
\newcommand{\mb}{\mathbb}
\newcommand{\F}{{\mathbb F}}
\newcommand{\D}{{\mc D}}
\newcommand{\E}{{\mc E}}
\newcommand{\BB}{{\mathfrak B}}
\newcommand{\jj}[1]{[#1]}
\newtheorem{defn}{Definition}[section]
\newtheorem{thm}[defn]{Theorem}
\newtheorem{prop}[defn]{Proposition}
\newtheorem{lemma}[defn]{Lemma}
\newtheorem{example}[defn]{Example}
\newtheorem{rem}[defn]{Remark}
\def\x{\relax\ifmmode {\mbox{*}}\else*\fi}
\newcommand{\beex}{\begin{example}$\!\!${\bf }$\;$\rm }
\newcommand{\enex}{ \end{example}}
\newcommand{\berem}{\begin{rem}$\!\!${\bf }$\;$\rm }
\newcommand{\enrem}{ \end{rem}}
\newcommand{\bedefi}{\begin{defn}$\!\!${\bf }$\;$\rm }
\newcommand{\findefi}{\end{defn}}
\def\H{{\mathcal H}}
\def\S{{\mathcal S}}
\def\L{{\mathcal L}}
\newcommand{\btheta}{\mathbf{\Theta}}
\newcommand{\ip}[2]{\langle {#1}|{#2}\rangle}
\newcommand{\gl}{{\mathfrak L}}
\newcommand{\LDD}{\gl(\D,\D^\times)}
\newcommand{\LBDD}[1]{\mbox{${\sf L}_{\textsf{B}}^{#1}(\D,\D^\times)$}}
\begin{document}
\title[Rigged Hilbert spaces ]
{Rigged Hilbert spaces and contractive families of Hilbert spaces}

\author{Giorgia Bellomonte and Camillo Trapani}

\address{Dipartimento di Matematica ed Applicazioni,
Universit\`a di Palermo, Via Archirafi 34, I-90123 Palermo, Italy}
\email{bellomonte@math.unipa.it; trapani@unipa.it}
\subjclass[2000]{47A70, 46A13, 46M40}

\keywords{Rigged Hilbert spaces, inductive and projective limits}

\begin{abstract}
The existence of a rigged Hilbert space whose extreme spaces are, respectively,  the projective and the inductive limit
 of a directed contractive family of Hilbert spaces is investigated.
It is proved that, when it exists, this rigged Hilbert space is the same as the canonical rigged Hilbert space associated
to a family of closable operators in the central Hilbert space.
\end{abstract}

\maketitle

\section{New introduction}

Indexed families of Hilbert spaces are very common in functional analysis (think, for instance, of Sobolev spaces $W^{k,2}({\mb R}$) or weighted $L^2$-spaces\,) and, if the index set satisfies certain axioms, they can originate several {\em global} structures, like lattices of Hilbert spaces (LHS), nested Hilbert spaces (NHS)\cite{gross}, partial inner product spaces ({\sc Pip}-spaces, \cite{antgross1, antgross2}), see the recent monograph \cite{at_pip} for precise definitions and full analysis.  The spirit of all these constructions (which all present the common feature of going {\em beyond} Hilbert space) is that, in many occasions, it is not the single space that has a relevance, but the whole family; this in particular happens when dealing with operators which can be very singular as regarded on a single space but may behave very regularly if considered (by extensions or restrictions, according to the cases) as operators acting on the family. Just to mention a very simple situation, a closed unbounded operator $A$ with domain $D(A)$ in Hilbert space $\H$ always gives rise to a scale of Hilbert spaces $\H_A \subset \H \subset \H_A^\times$, where $\H_A$ is the Hilbert space obtained giving $D(A)$ the graph norm, and $\H_A^\times$ is its conjugate dual. The operator $A$, which is unbounded in $\H$, is certainly bounded when regarded as a linear map from $\H_A$ into $\H_A^\times$.

The triplet of Hilbert spaces $\H_A \subset \H \subset \H_A^\times$ is a particular example of a {\em rigged Hilbert space} or, as it is also
called, {\em Gel'fand triplet}. Rigged Hilbert spaces and operators acting on them will be the main subject of this paper and so we recall the basic definitions.
 
A {rigged Hilbert space} (RHS) consists of a triplet $(\D
,\H, \D^\times)$ where $\D$ is a dense subspace of $\H$ endowed with
a topology $\sf t$, finer than that induced by the Hilbert norm of
$\H$, and $\D^\times$ is the conjugate dual of $\D[{\sf t}]$. The
space $\D^\times$ is usually endowed with the strong topology ${\sf
t}^\times:= \beta(\D^\times, \D)$
(we write $\D \hookrightarrow \H \hookrightarrow \D^\times$, where $\hookrightarrow$ denotes a continuous embedding with dense range).
 This structure is very familiar in distribution theory (think of the triplet $({\mc S}({\mb R}), L^2(\mb R), {\mc S}'({\mb R}))$ constituted by the Schwartz space of rapidly decreasing $C^\infty$-functions, the Lebesgue $L^2$-space on the real line and  the space of tempered distributions) and RHSs were also considered from the very beginning of the studies on unbounded operator algebras \cite{FL1, FL2, vEinKru}, revealing themselves as a powerful tool in this area \cite{schmud} and for applications to Quantum theories (see, e.g. \cite{at_pip} and references therein).
 RHSs are the simplest example of a {\sc Pip}-space. 
  
  The space $\LDD$ of all continuous linear maps from $\D[\sf t]$ into $\D^\times[{\sf
t}^\times]$ is a complex vector space with a natural involution but it does not behave properly from the point of view of multiplication. Let, in fact, $A,B\in \LDD$ and assume that there exists a locally convex space $\E$ such that $\D \hookrightarrow \E \hookrightarrow \D^\times$. Assume, for instance, that $B: \D \to \E$ continuously and that $A$ has a continuous extension $\widetilde A$ to $\E$, then one may put
 $$ (A\cdot B) \xi := {\widetilde A}(B\xi), \quad \forall \xi \in \D.$$

 This could be the starting for introducing a partial multiplication in $\LDD$. But as shown by K{\"u}rsten  in \cite{kuersten} with several examples, this product depends on the choice of the intermediate space $\E$ and so it is not well defined (we refer to a locally convex space $\E$ such that $\D \hookrightarrow \E \hookrightarrow \D^\times$ as to an {\em interspace}) .
  A possible outcome to this problem consists in giving more restrictive conditions for the possible choice of the interspace $\E$, giving rise to the notion of {\em multiplication framework} \cite{tratschinke1, tratschinke1, at_pip}.

The first question we will try to answer in this paper corresponds, in a sense, to reversing this point of view: starting from a family $\{\H_\alpha; \alpha \in
\F\}$  of Hilbert spaces indexed by a directed set $\F$, does there exist a RHS $(\D, \H, \D^\times)$ so that every  $\H_\alpha$'s is an interspace between $\D$ and $\D^\times$? The observation that in many examples the extreme spaces $\D$ and $\D^\times$ of a RHS are, respectively, the locally convex projective and inductive limits of a family of Hilbert spaces, suggests constructing the RHS we are looking for RHS by emulating the procedures that lead to projective and inductive limits of Hilbert spaces.

Let us consider a {\em directed
system of Hilbert spaces}, i.e. a family $\{\H_\alpha; \alpha \in
\F\}$  of Hilbert spaces indexed by a set $\F$ directed upward by
$\leq$, such that for every $\alpha, \beta \in \F$, with $\beta\geq
\alpha$, there exists a linear map $T_{\beta \alpha}:\H_\alpha \to
\H_\beta$ with the properties
\begin{itemize}
                \item[(a)] $T_{\beta \alpha}$ is injective;
               \item[(b)] $T_{\alpha\alpha}=I_\alpha$, the identity of $\H_\alpha$;
                \item[(c)] $T_{\gamma\alpha}=T_{\gamma\beta}T_{\beta\alpha}$, $\alpha\leq \beta\leq \gamma$.
              \end{itemize}
If the maps $T_{\beta\alpha}$ are isometries, then the usual construction of the inductive limit of the family  $\{\H_\alpha, T_{\beta \alpha}, \alpha, \beta \in \F, \beta\geq \alpha\}$   produces a Hilbert space \cite{kadison}. But if we relax this assumption, we may get more general locally convex spaces.

In this paper we will consider the case where the maps $T_{\beta\alpha}$  are contractions. Then, a {\em directed contractive system of Hilbert spaces} generates automatically two locally convex spaces $\D$ and $\D^\times$ in conjugate duality to each other (this pair of spaces will be called the {\em joint topological} limit of the system). This construction (outlined in Section \ref{sect_2})
follows essentially the usual steps of the standard procedure for getting inductive and projective limits of Hilbert spaces; what makes the difference in the present approach is just the realization that these two procedures can be done at once, when a directed contractive system of Hilbert spaces is given. 
It is worth mentioning that, if the index set $\F$ has, in addition, an order reversing involution $\alpha \to \bar \alpha$ (with a {\em self-dual} element $o=\bar o$ corresponding to the so-called {\em central} Hilbert space $\H_o$), then the algebraic inductive limit of the family gives rise to a nested Hilbert space, provided that $\H_{\bar\alpha}$ is the conjugate dual of $\H_\alpha$, but smaller space $\D$ contained as a dense subspace in all the Hilbert spaces of the family may fail to exist.

In Section \ref{sect_3} we will give sufficient conditions for the system $\{\H_\alpha, T_{\beta \alpha}, \alpha, \beta \in \F, \beta\geq \alpha\}$ to generate a RHS  $(\D ,\H, \D^\times)$ and we will prove the main results of this paper: the RHS constructed in this way coincides with the canonical RHS defined by a family ${\mc O}$ of closable operators defined on $\D$.

Finally, in Section \ref{sect4} we will consider operators of from $\D$ into $\D^\times$ that can
be obtained as inductive limits of bounded operators on the Hilbert
spaces $\H_\alpha$'s and show that they can be cast into a natural
structure of partial *-algebra \cite{pop-book}, obtained in a rather natural way from the construction of the joint
topological limit itself without any direct reference to interspaces.

\section{Joint topological limits of Hilbert spaces}\label{sect_2}

Let $\{\H_\alpha; \alpha \in \F\}$ be a family of Hilbert spaces indexed by a set $\F$ directed upward by $\leq$ (we denote by $\ip{\cdot}{\cdot}_\alpha$ and $\|\cdot\|_\alpha$, respectively, the inner product and the norm of $\H_\alpha$).
Suppose that for every $\alpha, \beta \in \F$, with $\beta\geq \alpha$, there exists a linear map $U_{\beta \alpha}:\H_\alpha \to \H_\beta$ with the properties
\begin{itemize}
                \item[(i)] $U_{\beta \alpha}$ is injective;
                \item[(ii)] $\|U_{\beta \alpha}\xi_\alpha\|_\beta \leq \|\xi_\alpha\|_\alpha, \quad \forall \xi_\alpha \in \H_\alpha$;
                \item[(iii)] $U_{\alpha\alpha}=I_\alpha$, the identity of $\H_\alpha$;
                \item[(iv)] $U_{\gamma\alpha}=U_{\gamma\beta}U_{\beta\alpha}$, $\alpha\leq \beta\leq \gamma$.
              \end{itemize}

The family $\{\H_\alpha, U_{\beta \alpha}, \alpha, \beta \in \F, \beta\geq \alpha\}$ is called a {\em directed contractive system of Hilbert spaces}.

In this section we will show, by modifying the procedure of  \cite[Ch.IV]{scaefer},
that every directed contractive system of Hilbert spaces $\{\H_\alpha, U_{\beta \alpha}, \alpha, \beta \in \F, \beta\geq \alpha\}$ produces two spaces $\D$ and $\D^\times$ in conjugate duality: the space $\D^\times$ is obtained as the inductive limit of the system, while $\D$ turns out to be the projective limit of the spaces $\H_\alpha$'s, with respect to the adjoint maps of the $U_{\beta \alpha}$'s.

\medskip Let
$\L$ be the set of all functions $(\xi_\alpha)$ on $\F$ such that $\xi_\alpha \in \H_\alpha$.

Let
$$\S:=\{(\xi_\alpha)\in \L; \exists \gamma \in \F:\, \xi_\beta = U_{\beta\alpha}\xi_\alpha;\,\, \beta \geq \alpha \geq \gamma\}.$$
Then $\S$ is a complex vector space.

Let $$\S_0:= \{ (\xi_\alpha)\in \S;\, \exists\, \overline{\alpha}\in \F: \xi_\beta=0 ,\; \forall \beta \geq \overline{\alpha}\}.$$

Given $\alpha \in \F$, we define a linear map $\Theta_\alpha:\H_\alpha \to \S$ as follows: when $\xi \in \H_\alpha$ we put $\Theta_\alpha\xi= (\xi_\beta)_{\beta\in \F}$ where
\begin{equation}\label{defn_theta}\xi_\beta = \left\{ \begin{array}{lc} U_{\beta\alpha} \xi & \mbox{if }\beta\geq \alpha \\
0& \mbox{otherwise.}\end{array} \right. \end{equation}

We notice that
\begin{itemize}
\item[(i$_1$)] $\Theta_\alpha\xi \not\in \S_0$ if $\xi$ is a non-zero element of $\H_\alpha$;
\item[(i$_2$)] $\Theta_\alpha\xi- \Theta_\beta U_{\beta\alpha}\xi \in \S_0$ if $\beta\geq \alpha$ and $\xi \in \H_\alpha$.

The first follows from $\Theta_\alpha\xi\in \S_0 \Rightarrow \exists \overline{\alpha}\mbox{ such that }  U_{\beta\alpha}\xi=0, \, \forall \beta \geq \alpha, \overline{\alpha}$ and, since $U_{\beta\alpha}$ is injective, we get $\xi=0$.

As for (i$_2$), observe that if $\gamma\geq \beta$, for the $\gamma$
components we have $(\Theta_\alpha \xi)_\gamma =U_{\gamma\alpha}\xi$
and  $(\Theta_\beta
U_{\beta\alpha}\xi)_\gamma=U_{\gamma\beta}U_{\beta\alpha}\xi =
U_{\gamma\alpha}\xi$. Hence, $\Theta_\alpha\xi- \Theta_\beta
U_{\beta\alpha}\xi \in \S_0$.

\end{itemize}
Moreover, $\S_0$ is a subspace of $\S$. Put ${\mc E}:=\S/\S_0$. If $(\xi_\alpha)\in \S$, we denote by $\jj{(\xi_\alpha)}$ the corresponding coset; i.e., $\jj{(\xi_\alpha)}= (\xi_\alpha)+\S_0$.

Define $\btheta_\alpha: \xi \in \H_\alpha \to \jj{{\Theta_\alpha\xi}}\in {\mc E}$.
 The linear map $\btheta_\alpha$ is injective by (i$_1$). If we put $\|\btheta_\alpha \xi\|_\alpha := \|\xi\|_\alpha$, $\xi \in \H_\alpha$, then $\btheta_\alpha (\H_\alpha)$ is a Hilbert space contained in ${\mc E}$ as a subspace, $\btheta_\alpha$ is an isometric isomorphism and $\btheta_\alpha (\H_\alpha) \subseteq \btheta_\beta (\H_\beta)$ if $\beta \geq \alpha$. Since $\F$ is directed,
the set $$ \D^\times := \bigcup_{\alpha \in \F} \btheta_\alpha (\H_\alpha) $$ is a vector subspace of ${\mc E}$.
On $\D^\times$ it is defined in natural way the inductive topology $\sf{t_i}$, i.e.,
the finest locally convex topology such that every $\btheta_\alpha$ is continuous from $\H_\alpha$ into $\D^\times$.
 Thus, $\D^\times =\varinjlim \btheta_\alpha \H_\alpha$, the inductive limit of the family $\{\H_\alpha, U_{\beta \alpha}, \alpha, \beta \in \F, \beta\geq \alpha\}$.


\bigskip

Let $\{\H_\alpha, U_{\beta \alpha}, \alpha, \beta \in \F, \beta\geq \alpha\}$ be a directed contractive system of Hilbert spaces. Every linear map $U_{\beta \alpha}: \H_\alpha \to \H_\beta$, $\beta\geq \alpha$ is bounded, then it has an adjoint $U_{\beta \alpha}^*: \H_\beta \to \H_\alpha$, satisfying
$$\ip{U_{\beta \alpha}\xi_\alpha}{\eta_\beta}_\beta= \ip{\xi_\alpha}{U_{\beta\alpha}^*\eta_\beta}_\alpha, \; \forall \xi_\alpha \in \H_\alpha,\,\eta_\beta \in \H_\beta, \; \beta \geq \alpha.$$
We put $V_{\alpha\beta}:=U_{\beta\alpha}^*$, $\alpha\leq \beta$.
Then we have
\begin{itemize}
                \item[(i)] $V_{\alpha\beta}$ is injective;
                \item[(ii)] $\|V_{\alpha\beta}\xi_\beta\|_\alpha \leq \|\xi_\beta\|_\beta, \quad \forall \xi_\beta \in \H_\beta$;
                \item[(iii)] $V_{\alpha\alpha}=I_\alpha$, the identity of $\H_\alpha$;
                \item[(iv)] $V_{\alpha\gamma}=V_{\alpha\beta}V_{\beta\gamma}$, $\alpha\leq \beta\leq \gamma$.
              \end{itemize}

\medskip

Now, let
$$\D :=\{ (\xi_\beta) \in \L; \xi_\alpha=V_{\alpha\beta}\xi_\beta, \; \forall \alpha, \beta\in \F, \alpha \leq \beta\}$$
Define $\Pi_\alpha: \D \to \H_\alpha$ as the projection of $\D$  onto $\H_\alpha$; i.e., $\Pi_\alpha (\xi_\beta)=\xi_\alpha$, whenever $(\xi_\beta) \in \D$.
The map $\Pi_\alpha$
 is injective. Indeed, if $\Pi_\alpha(\xi_\beta)=\xi_\alpha=0$, then $\xi_\delta=0$ for $\delta\leq\alpha$, since $\xi_\delta=V_{\delta\alpha}\xi_\alpha$. If $\delta \not\leq \alpha$, we take $\gamma\geq \delta, \alpha$ then $0=\xi_\alpha=V_{\alpha\gamma}\xi_\gamma$, which implies that $\xi_\gamma=0$ by the injectivity of $V_{\alpha\gamma}$. Thus,  $\xi_\delta=V_{\delta\gamma}\xi_\gamma=0$.  Hence $\D$ can be identified with a subspace of $\H_\alpha$, for every $\alpha \in \F$. It is clear from the definition that $\Pi_\alpha= V_{\alpha\beta}\circ \Pi_\beta$. The space $\D$ can be equipped with the projective topology $\sf{t_p}$ of the $\H_\alpha$'s and so $\D=\varprojlim \H_\alpha$, the projective limit of the family $\{\H_\alpha, U_{\beta \alpha}, \alpha, \beta \in \F, \beta\geq \alpha\}$.

\bigskip
The fact that $\D$ and $ \D^\times$ can be put in conjugate duality relies on the following two observations.
\begin{enumerate}
\item[(D1)] If $(\xi_\alpha)\in \D$ and $(\eta_\alpha) \in \S_0$ then $\displaystyle \lim_\alpha \ip{\xi_\alpha}{\eta_\alpha}_\alpha=0$,
since $\eta_\beta=0$, for $\beta$ large enough.
\item[(D2)] If $(\xi_\alpha)\in \D$ and $(\eta_\alpha)\in \Theta_\gamma(\H_\gamma)$, then $\xi_\alpha =V_{\alpha\beta}\xi_\beta$ for every $\beta \geq \alpha$ and, on the other hand, $\eta_\delta=U_{\delta\gamma}\eta_\gamma$ for $\delta\geq \gamma$. Then for $\delta\geq \gamma$, we have
    $$\ip{\xi_\gamma}{\eta_\gamma}_\gamma-\ip{\xi_\delta}{\eta_\delta}_\delta=\ip{V_{\gamma\delta}\xi_\delta}{\eta_\gamma}_\gamma-
    \ip{\xi_\delta}{U_{\delta\gamma}\eta_\gamma}_\delta =0,$$
    since $V_{\gamma\delta}=U_{\delta\gamma}^*$. This means that, if $(\xi_\alpha)\in \D$ and $(\eta_\alpha)\in \Theta_\gamma(\H_\gamma)$, the net $\{\ip{\xi_\alpha}{\eta_\alpha}_\alpha\}$ is constant for $\alpha \geq \gamma$. Hence $\displaystyle\lim_\alpha \ip{\xi_\alpha}{\eta_\alpha}_\alpha$ always exists.
\end{enumerate}
Now let $(\xi_\alpha)\in \D$ and $\jj{(\eta_\alpha)}\in \D^\times$.
Then we define
$$ B(\jj{(\eta_\alpha)},(\xi_\alpha) ):=\lim_\alpha \ip{\xi_\alpha}{\eta_\alpha}_\alpha.$$
The previous observations show that $B$ is a well-defined
sesquilinear map (linear in the second argument and linear conjugate
in the first).

\begin{lemma} \label{lemma 3.1} Let $\D$ and $\D^\times$ be constructed as above. The following statements hold.
\begin{itemize}
\item[(i)] The (conjugate) duality between $\D^\times$ and $\D$ is separating.
\item[(ii)] $\Pi_\alpha(\D)$ is dense in $\H_\alpha$, for every $\alpha\in \F$.
\item[(iii)] The conjugate dual of $\D[{\sf t_p}]$ is (isomorphic to) $\D^\times$.
\end{itemize}
\end{lemma}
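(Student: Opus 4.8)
The plan is to prove the three statements in order, leveraging the biorthogonality built into the pairing $B$ and the injectivity properties of $\Pi_\alpha$ and $\btheta_\alpha$.

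For (i): I would show the duality is separating on both sides. Suppose $(\xi_\alpha)\in\D$ satisfies $B(\jj{(\eta_\alpha)},(\xi_\alpha))=0$ for all $\jj{(\eta_\alpha)}\in\D^\times$. Fix $\gamma\in\F$ and take $\jj{(\eta_\alpha)}=\btheta_\gamma\eta$ for an arbitrary $\eta\in\H_\gamma$; by observation (D2) the net $\ip{\xi_\alpha}{\eta_\alpha}_\alpha$ is eventually constant equal to $\ip{\xi_\gamma}{\eta}_\gamma$, so $B(\btheta_\gamma\eta,(\xi_\alpha))=\ip{\xi_\gamma}{\eta}_\gamma=0$ for all $\eta\in\H_\gamma$, forcing $\xi_\gamma=0$; since $\gamma$ was arbitrary, $(\xi_\alpha)=0$. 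Conversely, if $\jj{(\eta_\alpha)}\in\D^\times$ pairs to zero with all of $\D$, write $\jj{(\eta_\alpha)}=\btheta_\gamma\eta$ for some $\gamma$ and $\eta\in\H_\gamma$; I need a supply of elements of $\D$ that "reach" $\H_\gamma$, which is exactly statement (ii), so it is cleanest to prove (ii) first and then return to close this direction.

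For (ii): the density of $\Pi_\alpha(\D)$ in $\H_\alpha$ is the main obstacle, since $\D$ is defined by an infinite compatibility condition and a priori could be small. The natural idea is to feed a vector $\xi\in\H_\alpha$ through the maps $V_{\delta\alpha}=U_{\alpha\delta}^*$ "downward" from $\alpha$; the difficulty is coherently extending to indices $\delta$ not comparable to $\alpha$. I would try: given $\xi\in\H_\alpha$ and given the need to approximate, pass to the subnet of indices $\beta\ge\alpha$, define $\eta^{(\beta)}_\delta:=V_{\delta\alpha}\xi$ for $\delta\le\alpha$ and extend compatibly, obtaining genuine elements of $\D$ whose $\alpha$-component is $\xi$ itself when the system is nice, or otherwise use that $U_{\beta\alpha}$ has dense range considerations. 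If a direct construction is obstructed, the fallback is a duality argument: if $\Pi_\alpha(\D)$ were not dense, pick $0\ne\eta\in\H_\alpha$ orthogonal to it; then $\btheta_\alpha\eta\in\D^\times$ pairs to zero with all of $\D$, and one derives a contradiction from the structure of $\S_0$ and the eventual-constancy in (D2) — but this risks circularity with (i), so I would instead establish (ii) concretely by exhibiting, for each $\xi\in\H_\alpha$, the element of $\D$ given on $\delta\le\alpha$ by $V_{\delta\alpha}\xi$ and on incomparable $\delta$ by choosing $\gamma\ge\delta,\alpha$ and setting the $\delta$-component to $V_{\delta\gamma}U_{\gamma\alpha}\xi$, checking via (iv) and cofinality that this is well defined independent of $\gamma$; this shows in fact $\xi\in\Pi_\alpha(\D)$, giving surjectivity, hence density a fortiori.

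For (iii): I would show the canonical map $\D^\times\to(\D[{\sf t_p}])'$ sending $\jj{(\eta_\alpha)}$ to the functional $(\xi_\alpha)\mapsto\overline{B(\jj{(\eta_\alpha)},(\xi_\alpha))}$ is a linear isomorphism. Injectivity is (i). Continuity of each such functional with respect to ${\sf t_p}$ follows because, for $\jj{(\eta_\alpha)}=\btheta_\gamma\eta$, the functional equals $\xi\mapsto\ip{\Pi_\gamma(\xi)}{\eta}_\gamma$ by (D2), which is continuous for the seminorm $\|\Pi_\gamma(\cdot)\|_\gamma$ defining ${\sf t_p}$. Surjectivity is the substantive point: given $F\in(\D[{\sf t_p}])'$, continuity with respect to the projective topology means $|F(\xi)|\le C\|\Pi_\gamma(\xi)\|_\gamma$ for some $\gamma\in\F$ and $C>0$ (a finite max of such seminorms collapses to a single one by directedness of $\F$ and property (ii) of the $V$'s); since $\Pi_\gamma(\D)$ is dense in $\H_\gamma$ by (ii), $F$ factors through $\Pi_\gamma$ as a bounded conjugate-linear functional on $\H_\gamma$, hence by Riesz there is $\eta\in\H_\gamma$ with $F(\xi)=\ip{\Pi_\gamma(\xi)}{\eta}_\gamma$ for all $\xi\in\D$, i.e. $F$ is represented by $\btheta_\gamma\eta\in\D^\times$. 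The only delicate bookkeeping is the reduction of an arbitrary ${\sf t_p}$-continuous seminorm bound to a single-index bound, which uses (ii) of the $V_{\alpha\beta}$ together with directedness of $\F$.
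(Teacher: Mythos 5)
Your treatment of (i) (first half) and of (iii) follows the paper's own line and is essentially sound, and you have correctly located the crux: everything hinges on (ii). But your proof of (ii) has a genuine gap, and the stronger claim you actually argue for --- surjectivity of $\Pi_\alpha$ --- is false. The recipe $\xi_\delta:=V_{\delta\gamma}U_{\gamma\alpha}\xi$ (for $\gamma\ge\delta,\alpha$) is not independent of $\gamma$: for $\gamma'\ge\gamma$ one gets
$V_{\delta\gamma'}U_{\gamma'\alpha}\xi=V_{\delta\gamma}\bigl(U_{\gamma'\gamma}^{*}U_{\gamma'\gamma}\bigr)U_{\gamma\alpha}\xi$,
and $U_{\gamma'\gamma}^{*}U_{\gamma'\gamma}=I_\gamma$ only when $U_{\gamma'\gamma}$ is an isometry, which is precisely the hypothesis that is being relaxed here. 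Worse, the family you build does not lie in $\D$: for $\beta\ge\alpha$ your recipe gives $\xi_\beta=U_{\beta\alpha}\xi$, and the coherence condition defining $\D$ would force $\xi=V_{\alpha\beta}\xi_\beta=U_{\beta\alpha}^{*}U_{\beta\alpha}\xi$, again false for a proper contraction. Indeed $\Pi_\alpha$ is not surjective in general: for the two-element directed set $\{\alpha,\beta\}$, $\alpha\le\beta$, with $U_{\beta\alpha}$ a compact injective contraction, one has $\Pi_\alpha(\D)=\mathrm{ran}(U_{\beta\alpha}^{*})$, a proper dense subspace of $\H_\alpha$; and in the paper's own example with $\H_\alpha=L^2(\mathbb{R},(1+|x|^\alpha)\,dx)$ one finds $\Pi_0(\D)=\D^\infty(Q)\subsetneq L^2(\mathbb{R})$. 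What is true (when the $V_{\alpha\beta}$ are injective) is $\Pi_\alpha(\D)=\bigcap_{\beta\ge\alpha}\mathrm{ran}(V_{\alpha\beta})$, a downward directed family of dense operator ranges; the density of that intersection is the real content of (ii) and does not follow by the formal manipulations you propose.

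Because (ii) is left unproved, the second half of (i) (separation on the $\D^\times$ side) and the surjectivity step of (iii) (factoring a ${\sf t_p}$-continuous functional through $\Pi_\gamma$ and applying Riesz on a dense subspace of $\H_\gamma$) are both unsupported. You were right to be wary of the duality fallback for (ii) as circular with (i) --- that is in substance what the paper itself does, since its proof of the second half of (i) treats $\xi_\beta$ as ``a generic element of $\H_\beta$'', i.e.\ assumes (ii), and then declares (ii) an easy consequence of (i) --- but your concrete replacement does not close the circle. On the positive side, your reduction in (iii) of a finite maximum of seminorms to a single index, using directedness of the index set and contractivity of the $V_{\alpha\beta}$, is correct and is spelled out more carefully than in the paper.
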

{ \begin{proof}
(i): Assume that, for some $(\xi_\alpha) \in \D$ and for every $\jj{(\eta_\alpha)}\in \D^\times$,  $\lim_\alpha \ip{\xi_\alpha}{\eta_\alpha}_\alpha=0$. Then, by (D2) and  for sufficiently large $\beta$, $\ip{\xi_\beta}{\eta_\beta}_\beta=0$. Since every vector $\eta_\beta\in \H_\beta$ defines $\btheta_\beta(\eta_\beta)\in \D^\times$, $\eta_\beta$ is an arbitrary vector in $\H_\beta$. Hence $\xi_\beta=0$. But as seen above if $(\xi_\alpha) \in \D$ has $0$ as $\beta$-component then $(\xi_\alpha)=(0)$.\\
Assume now that, for some $\jj{(\eta_\alpha)}\in \D^\times$ and for
every $(\xi_\alpha) \in \D$ , $\lim_\alpha
\ip{\xi_\alpha}{\eta_\alpha}_\alpha=0$. This implies that, for
sufficiently large $\beta$, $\ip{\xi_\beta}{\eta_\beta}_\beta=0$.
Also in this case $\xi_\beta$ can be thought as a generic element of
$\H_\beta$. Hence $\eta_\beta=0$ for sufficiently large $\beta$.
Thus, $(\eta_\alpha)\in\S_0$.

(ii): This is an easy consequence of (i).

(iii): Fix $(\eta_\alpha)\in \S$ so that $\jj{(\eta_\alpha)}\in \D^\times$. Then
$$B(\jj{(\eta_\alpha)},(\xi_\alpha) )=\lim_\alpha \ip{\xi_\alpha}{\eta_\alpha}_\alpha $$
is defined, for every $(\xi_\alpha)\in \D$. Since there exists
$\delta \in \F$ such that
$\ip{\xi_\gamma}{\eta_\gamma}_\gamma=\ip{\xi_\delta}{\eta_\delta}_\delta$,
$\gamma\geq \delta$, we have
$$B(\jj{(\eta_\alpha)},(\xi_\alpha) )= \ip{\xi_\delta}{\eta_\delta}_\delta.$$
Hence
$$|B(\jj{(\eta_\alpha)},(\xi_\alpha) )|=|\ip{\xi_\delta}{\eta_\delta}_\delta|\leq \|\xi_\delta\|_\delta \|\eta_\delta\|_\delta.$$
This proves that every $\jj{(\eta_\alpha)}\in \D^\times$ defines a continuous linear functional on $\D[{\sf t_p}]$.
\\ Conversely, let $F$ be a continuous linear functional on $\D[{\sf t_p}]$. Then there exists $\beta \in \F$ and $C>0$ such that
$$| F((\xi_\alpha))|\leq C\|\xi_\beta\|_\beta , \quad \forall (\xi_\alpha)\in \D,$$
where $\xi_\beta=\Pi_\beta (\xi_\alpha)$.
Let us now define a linear functional $F_\beta$ on $\Pi_\beta(\D)$ by
$$ F_\beta (\xi_\beta)=F((\xi_\alpha)).$$
Then
$$|F_\beta (\xi_\beta)|\leq C\|\xi_\beta\|_\beta,\quad \forall \xi_\beta \in \Pi_\beta(\D).$$
Since $\Pi_\beta(\D)$ is dense in $\H_\beta$, $F_\beta$ extends to a bounded linear functional on $\H_\beta$. Thus, there exists $\eta_\beta\in \H_\beta$ such that
$$ F_\beta (\xi_\beta)= \ip{\xi_\beta}{\eta_\beta}_\beta.$$
Now let us consider $\btheta_\beta \eta_\beta\in \D^\times$. Then we
have, for $\beta$ large enough
$$ F((\xi_\alpha))= F_\beta(\xi_\beta) = \ip{\xi_\delta}{\eta_\delta}_\delta =B(\btheta_\beta \eta_\beta, (\xi_\beta) ).$$
This proves the statement. \end{proof}}

We summarize the previous discussion in the following
\begin{thm} \label{thm_main} Let $\{\H_\alpha, U_{\beta \alpha}, \alpha, \beta \in \F, \beta\geq \alpha\}$ be a directed contractive system of Hilbert spaces.

\begin{itemize}

\item[(d$_1$)] There exists a conjugate dual pair $(\D^\times, \D)$ and, for every $\alpha \in \F$, a pair of injective linear maps $(\Pi_\alpha, \btheta_\alpha)$,
$
 \Pi_\alpha: \D \to \H_\alpha,$
 $\btheta_\alpha: \H_\alpha \to \D^\times,
$
both with dense range, such that
\begin{itemize}
  \item[(I$_1$)]$\Pi_\alpha=V_{\alpha\beta}\circ \Pi_\beta, \; \alpha \leq \beta$ (where $V_{\alpha\beta}=U_{\beta\alpha}^*$);
  \item[(I$_2$)]$\btheta_\alpha= \btheta_\beta\circ U_{\beta\alpha}, \; \alpha \leq \beta$
  \item[(I$_3$)]$\D^\times = \bigcup_{\alpha \in \F} \btheta_\alpha (\H_\alpha).$
  \item[(I$_4$)]If $\xi \in \D$ and $\eta\in \D^\times$, with $\eta = \btheta_\alpha \eta_\alpha$, for some $\alpha \in \F$ and $\eta_\alpha \in \H_\alpha$,
  then
  $$ B(\eta, \xi)= B(\btheta_\alpha\eta_\alpha, \xi)=\ip{\Pi_\alpha\xi}{\eta_\alpha}_\alpha,$$
  independently of $\alpha$ such that $\eta \in \btheta_\alpha (\H_\alpha)$.
\end{itemize}
\item[(d$_2$)] The pair $(\D^\times, \D)$ occurring in (d$_1$) is uniquely determined by the conditions given in (d$_1$), in the following sense: if $(\D_1^\times, \D_1 )$ is another conjugate dual pair for which  there exists, for every $\alpha \in \F$, a pair $(\Delta_\alpha, {\mathbf \Gamma}_\alpha)$, $
 \Delta_\alpha: \D_1 \to \H_\alpha,$ ${\mathbf \Gamma}_\alpha: \H_\alpha \to \D_1^\times$ such that the statements corresponding to (I$_1$) -- (I$_4$) are satisfied, then there exists an injective linear map  $T: \D^\times \to \D_1^\times$ such that, for every $\alpha \in \F$, ${\mathbf \Gamma}_\alpha = T\circ \btheta_\alpha$ and $\Delta_\alpha = \Pi_\alpha\circ T^\times$, where $T^\times: \D_1 \to \D$ denotes the adjoint map of T (w.r. to the Mackey topology).
\end{itemize}
\end{thm}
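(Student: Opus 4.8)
The plan is to dispose of (d$_1$) quickly, since essentially every ingredient has already been established, and then to concentrate on (d$_2$), which carries the real content. For (d$_1$) I take $\D$, $\D^\times$, $\Pi_\alpha$ and $\btheta_\alpha$ exactly as built in the paragraphs preceding the theorem. Injectivity of $\Pi_\alpha$ and of $\btheta_\alpha$ has been checked there; that $\Pi_\alpha$ has dense range in $\H_\alpha$ is Lemma~\ref{lemma 3.1}(ii), and the density requirement on $\btheta_\alpha$ I read as the identity $\D^\times=\bigcup_\alpha\btheta_\alpha(\H_\alpha)$, i.e. (I$_3$) itself (a single $\btheta_\alpha(\H_\alpha)$ need not be dense in $\D^\times$ when $\dim\H_\alpha$ grows). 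Relation (I$_1$) is the identity $\Pi_\alpha=V_{\alpha\beta}\circ\Pi_\beta$ noted during the construction of $\D$; (I$_2$) is observation (i$_2$) read modulo $\S_0$ in ${\mc E}$; and (I$_4$), together with the stated independence of $\alpha$, is observation (D2) combined with the computation in the proof of Lemma~\ref{lemma 3.1}(iii) and the identity $\ip{\Pi_\delta\xi}{U_{\delta\alpha}\eta_\alpha}_\delta=\ip{V_{\alpha\delta}\Pi_\delta\xi}{\eta_\alpha}_\alpha=\ip{\Pi_\alpha\xi}{\eta_\alpha}_\alpha$. Finally, that the conjugate duality is separating and that the conjugate dual of $\D[{\sf t_p}]$ is $\D^\times$ are Lemma~\ref{lemma 3.1}(i) and (iii).

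For (d$_2$), let $(\D_1^\times,\D_1)$ with the pairing $B_1$ and the maps $(\Delta_\alpha,\mathbf{\Gamma}_\alpha)$ satisfy the analogues of (I$_1$)--(I$_4$). I define $T:\D^\times\to\D_1^\times$ by $T(\btheta_\alpha\eta_\alpha):=\mathbf{\Gamma}_\alpha\eta_\alpha$, which is defined on all of $\D^\times$ by (I$_3$). The crucial point is well-definedness: if $\btheta_\alpha\eta_\alpha=\btheta_\beta\eta_\beta$, I pick $\gamma\geq\alpha,\beta$ using the directedness of $\F$; by (I$_2$) these equal, respectively, $\btheta_\gamma U_{\gamma\alpha}\eta_\alpha$ and $\btheta_\gamma U_{\gamma\beta}\eta_\beta$, so $U_{\gamma\alpha}\eta_\alpha=U_{\gamma\beta}\eta_\beta$ after cancelling the injective $\btheta_\gamma$, and then the analogue of (I$_2$) for $\mathbf{\Gamma}$ gives $\mathbf{\Gamma}_\alpha\eta_\alpha=\mathbf{\Gamma}_\gamma U_{\gamma\alpha}\eta_\alpha=\mathbf{\Gamma}_\gamma U_{\gamma\beta}\eta_\beta=\mathbf{\Gamma}_\beta\eta_\beta$. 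The same trick of lifting two vectors to a common index gives additivity and homogeneity of $T$, and $T$ is injective because $T(\btheta_\alpha\eta_\alpha)=0$ forces $\mathbf{\Gamma}_\alpha\eta_\alpha=0$, hence $\eta_\alpha=0$ by injectivity of $\mathbf{\Gamma}_\alpha$, hence $\btheta_\alpha\eta_\alpha=0$. By construction $\mathbf{\Gamma}_\alpha=T\circ\btheta_\alpha$.

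It remains to produce $T^\times$. The key observation is that for each $\zeta\in\D_1$ the family $(\Delta_\alpha\zeta)_{\alpha\in\F}$ belongs to $\D$, because the analogue of (I$_1$) for $\Delta$ says precisely $\Delta_\alpha\zeta=V_{\alpha\beta}\Delta_\beta\zeta$ for $\alpha\leq\beta$, which is the compatibility defining $\D$. Hence I set $T^\times\zeta:=(\Delta_\alpha\zeta)_\alpha$; this is linear (each $\Delta_\alpha$ is) and satisfies $\Pi_\alpha(T^\times\zeta)=\Delta_\alpha\zeta$, i.e. $\Delta_\alpha=\Pi_\alpha\circ T^\times$. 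To confirm that $T^\times$ is the adjoint of $T$, take $\eta=\btheta_\alpha\eta_\alpha\in\D^\times$ and compute, using (I$_4$) on both systems,
\[
B_1(T\eta,\zeta)=B_1(\mathbf{\Gamma}_\alpha\eta_\alpha,\zeta)=\ip{\Delta_\alpha\zeta}{\eta_\alpha}_\alpha=\ip{\Pi_\alpha(T^\times\zeta)}{\eta_\alpha}_\alpha=B(\eta,T^\times\zeta).
\]
Since every vector of $\D^\times$ is of the form $\btheta_\alpha\eta_\alpha$, the functional $\eta\mapsto B_1(T\eta,\zeta)$ is represented by $T^\times\zeta\in\D$; thus $T$ is $\sigma(\D^\times,\D)$--$\sigma(\D_1^\times,\D_1)$ continuous and $T^\times$ is its transpose, equivalently its adjoint with respect to the Mackey topologies. (Running the same construction with the two systems interchanged produces an inverse for $T$, so $T$ is in fact an isomorphism of the two dual pairs, and it is clearly uniquely determined by $T\circ\btheta_\alpha=\mathbf{\Gamma}_\alpha$ together with (I$_3$).)

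I expect the main obstacle to be precisely the two steps in (d$_2$): first, the well-definedness of $T$, where one must pass to a common upper bound $\gamma$ and use, in the right order, the injectivity of the $\btheta_\gamma$'s and the intertwining relations (I$_2$) and its $\mathbf{\Gamma}$-analogue; and second, the recognition that it is the $\D$-analogue of (I$_1$) that makes $(\Delta_\alpha\zeta)_\alpha$ an element of $\D$, after which (I$_4$) on both systems closes the adjoint identity. By comparison, (d$_1$) is bookkeeping; the only genuinely interpretive point there is what ``dense range'' should mean for $\btheta_\alpha$, which I read as (I$_3$) above.
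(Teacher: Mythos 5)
Your proof is correct and follows essentially the same route as the paper: (d$_1$) is read off from the construction and Lemma \ref{lemma 3.1}, and $T$ is defined by gluing the maps ${\mathbf \Gamma}_\alpha\circ\btheta_\alpha^{-1}$, with well-definedness coming from the extension property forced by (I$_2$). The only (welcome) addition is that you exhibit $T^\times\zeta=(\Delta_\alpha\zeta)_\alpha$ explicitly and verify $B_1(T\eta,\zeta)=B(\eta,T^\times\zeta)$, whereas the paper simply invokes the Mackey adjoint and deduces $\Delta_\alpha=\Pi_\alpha\circ T^\times$ from the same duality computation.
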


\begin{proof} It remains to prove (d$_2$). It is easy to see that ${\mathbf \Gamma}_\alpha \circ {\btheta_\alpha}^{-1}$ is an isomorphism of $\btheta_\alpha (\H_\alpha)$ onto ${\mathbf \Gamma}_\alpha(\H_\alpha)$, for every $\alpha \in \F$.
Moreover, if $\xi_\alpha \in \H_\alpha$ and $\beta \geq \alpha$,
\begin{eqnarray*} ({\mathbf \Gamma}_\beta \circ {\btheta_\beta}^{-1})(\btheta_\alpha\xi_\alpha)&=&{\mathbf \Gamma}_\beta (U_{\beta \alpha}\xi_\alpha)\\
&=& {\mathbf \Gamma}_\alpha \xi_\alpha = ({\mathbf \Gamma}_\alpha \circ {\btheta_\alpha}^{-1})(\btheta_\alpha\xi_\alpha). \end{eqnarray*}
Hence ${\mathbf\Gamma}_\beta \circ {\btheta_\beta}^{-1}$ extends ${\mathbf \Gamma}_\alpha \circ {\btheta_\alpha}^{-1}$. Hence there exists an injective linear map $T$ from $\D=\bigcup_{\alpha \in \F}\btheta_\alpha(\H_\alpha)$ onto $\D_1=\bigcup_{\alpha \in \F}{\mathbf \Gamma}_\alpha(\H_\alpha)$ which coincides with ${\mathbf \Gamma}_\alpha \circ {\btheta_\alpha}^{-1}$ when restricted to $\btheta_\alpha(\H_\alpha)$. Clearly, ${\mathbf \Gamma}_\alpha = T\circ \btheta_\alpha$. Now, if we denote by $B$ and $B_1$ the sesquilinear forms defining the duality, respectively, of the pairs  $(\D^\times, \D)$ and $( \D_1^\times, \D_1,)$, we get, for $\xi_1 \in \D_1, \eta_1 \in \D_1^\times$ with $\eta_1 = {\mathbf\Gamma}_\alpha \eta_{1,\alpha}$, for some $\alpha \in \F$ and $\eta_{1,\alpha} \in \H_\alpha$,
\begin{eqnarray*}
B_1( \eta_1, \xi_1)&=& B_1( {\mathbf\Gamma}_\alpha \eta_{1,\alpha},\xi_1)= B_1( T\circ \btheta_\alpha \eta_{1,\alpha}, \xi_1) \\ &=& B(\btheta_\alpha \eta_{1,\alpha}, T^\times \xi_1)=\ip{(\Pi_\alpha\circ T^\times)\xi_1}{\eta_{1,\alpha}}_\alpha.
\end{eqnarray*}
On the other hand,
$$B_1( {\mathbf\Gamma}_\alpha \eta_{1,\alpha}, \xi_1)=\ip{\Delta_\alpha\xi_1}{\eta_{1,\alpha}}_\alpha.$$
These equalities imply that $\Delta_\alpha = \Pi_\alpha\circ T^\times$.
\end{proof}

The conjugate dual pair $(\D^\times,\D)$ occurring in Theorem \ref{thm_main} will be called the {\em joint topological limit} of the directed contractive system $\{\H_\alpha, U_{\beta \alpha}, \alpha, \beta \in \F, \beta\geq \alpha\}$ of Hilbert spaces.

\medskip The space $\D$ can be, of course, identified with a subspace of $\D^\times$. Indeed, for every $\alpha \in \F$, the map $\Lambda_\alpha:=\btheta_\alpha\circ \Pi_\alpha$ is linear and injective. So that $\Lambda_\alpha(\D)$ is a subspace of $\D^\times$, isomorphic to $\D$. If $(\xi_\alpha), (\eta_\alpha)\in \D$ we have
$$B( (\btheta_\beta\circ \Pi_\beta) (\eta_\alpha), (\xi_\alpha))=\lim_\gamma \ip{\xi_\gamma}{ U_{\gamma\beta}\eta_\beta}_\gamma= \lim_\gamma\ip{V_{\beta\gamma}\xi_\gamma}{\eta_\beta}_\beta =\ip{\xi_\beta}{\eta_\beta}_\beta$$
which makes clear the dependence on $\beta$ of the left hand side.
An unambiguous identification of $\D$ into $\D^\times$ is possible,
for instance, if there exists $\gamma\in \F$ such that
$\btheta_\alpha\circ \Pi_\alpha = \btheta_\beta\circ \Pi_\beta$, for
$\beta\geq \alpha\geq \gamma$.  This equality holds if, and only if,
the $V_{\alpha\beta}$'s ($\beta\geq \alpha$) are isometries
[Proposition \ref{prop_isometr}, below]. This assumption is,
however, too strong and not fulfilled in typical examples.

\begin{prop}\label{prop_isometr} The following statements are equivalent.
\begin{itemize}
\item[(i)] $\btheta_\alpha\circ \Pi_\alpha = \btheta_\beta\circ \Pi_\beta$, for  $\beta\geq \alpha$.
\item[(ii)]$V_{\alpha\beta}$ is an isometry, for  $\beta\geq \alpha$.
\end{itemize}
\end{prop}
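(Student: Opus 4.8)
The plan is to show that each of (i) and (ii) is equivalent to the single bounded-operator identity $U_{\beta\alpha}V_{\alpha\beta}=I_\beta$ on $\H_\beta$, for an arbitrary fixed pair $\alpha\le\beta$; the desired equivalence (i) $\Leftrightarrow$ (ii) then follows.

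First I would expand the left-hand side of (i). Using the structural relations already recorded, namely $\Pi_\alpha=V_{\alpha\beta}\circ\Pi_\beta$ and $\btheta_\alpha=\btheta_\beta\circ U_{\beta\alpha}$ (the latter being (I$_2$)), one gets for every $\xi\in\D$
$$(\btheta_\alpha\circ\Pi_\alpha)\xi=\btheta_\beta\big(U_{\beta\alpha}V_{\alpha\beta}\,\Pi_\beta\xi\big),\qquad (\btheta_\beta\circ\Pi_\beta)\xi=\btheta_\beta(\Pi_\beta\xi).$$
Since $\btheta_\beta$ is injective, (i) holds if and only if $U_{\beta\alpha}V_{\alpha\beta}\,\eta=\eta$ for every $\eta\in\Pi_\beta(\D)$. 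By Lemma \ref{lemma 3.1}(ii), $\Pi_\beta(\D)$ is dense in $\H_\beta$, and $U_{\beta\alpha}V_{\alpha\beta}-I_\beta$ is bounded; hence (i) is equivalent to $U_{\beta\alpha}V_{\alpha\beta}=I_\beta$.

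Next I would translate that operator identity into a property of $V_{\alpha\beta}$. Since $V_{\alpha\beta}=U_{\beta\alpha}^*$, the defining relation of the adjoint gives $\ip{U_{\beta\alpha}V_{\alpha\beta}\eta_\beta}{\zeta_\beta}_\beta=\ip{V_{\alpha\beta}\eta_\beta}{V_{\alpha\beta}\zeta_\beta}_\alpha$ for all $\eta_\beta,\zeta_\beta\in\H_\beta$, so $U_{\beta\alpha}V_{\alpha\beta}=I_\beta$ means precisely that $V_{\alpha\beta}$ preserves inner products. By the polarization identity this is equivalent to $\|V_{\alpha\beta}\eta_\beta\|_\alpha=\|\eta_\beta\|_\beta$ for all $\eta_\beta\in\H_\beta$, that is, to (ii). Chaining the two equivalences yields the statement.

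The only step requiring a little attention — effectively the main obstacle, though a mild one — is the upgrade from the identity $U_{\beta\alpha}V_{\alpha\beta}\eta=\eta$ holding merely on the dense subspace $\Pi_\beta(\D)$ to the operator equality $U_{\beta\alpha}V_{\alpha\beta}=I_\beta$ on all of $\H_\beta$; this uses Lemma \ref{lemma 3.1}(ii) together with the continuity of all the maps involved. Everything else is direct substitution of the identities (I$_1$) and (I$_2$), the injectivity of $\btheta_\beta$, and the elementary characterization of isometries among bounded operators via polarization; and since the equivalence is stated for each fixed pair $\alpha\le\beta$, the directedness of $\F$ plays no role.
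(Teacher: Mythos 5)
Your proof is correct and follows essentially the same route as the paper: both reduce (i) to the operator identity $U_{\beta\alpha}V_{\alpha\beta}=I_\beta$ via the relations (I$_1$), (I$_2$) and the injectivity of $\btheta_\beta$, and then identify that identity with statement (ii). If anything, your version is slightly more careful, since you justify the passage from $U_{\beta\alpha}V_{\alpha\beta}\eta=\eta$ on the dense subspace $\Pi_\beta(\D)$ to all of $\H_\beta$ by density and boundedness, whereas the paper attributes this step to the injectivity of $\Pi_\beta$.
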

\begin{proof}  If $\beta\geq \alpha$,  then, by Theorem  \ref{thm_main},
\begin{equation}\label{eq_1} \btheta_\alpha \circ \Pi_\alpha =  (\btheta_\beta \circ U_{\beta\alpha})\circ (V_{\alpha\beta}\circ\Pi_\beta)= \btheta_\beta \circ ( U_{\beta\alpha}\circ V_{\alpha\beta})\circ \Pi_\beta.\end{equation}
So if $U_{\beta\alpha}V_{\alpha\beta}=I_\beta$, we get $\btheta_\alpha\circ \Pi_\alpha = \btheta_\beta\circ \Pi_\beta$.
On the other hand, if $\btheta_\alpha\circ \Pi_\alpha = \btheta_\beta\circ \Pi_\beta$, for $\beta\geq \alpha$, then, by \eqref{eq_1}, we have
$$ \btheta_\beta \circ ( U_{\beta\alpha}\circ V_{\alpha\beta})\circ \Pi_\beta = \btheta_\beta\circ \Pi_\beta.$$
Since both $\btheta_\beta$ and $\Pi_\beta$ are injective, we easily obtain $U_{\beta\alpha}V_{\alpha\beta}=I_\beta$; i.e., $V_{\alpha\beta}$ is an isometry.
\end{proof}

\section{Rigged Hilbert spaces as joint topological limit}\label{sect_3}
In this section we will discuss the possibility that the joint topological limit $(\D, \D^\times)$ of a directed contractive family $\{\H_\alpha, U_{\beta \alpha}, \alpha, \beta \in \F, \beta\geq \alpha\}$ of Hilbert spaces gives rise to a rigged Hilbert space. Some explanation is here in order. As we have seen (we maintain the notations of Section \ref{sect_2}), every space $\H_\alpha$ can be identified, by means of the map $\btheta_\alpha$, with a subspace of $\D^\times$ as well as $\D$ can be identified with a subspace of $\H_\alpha$ by $\Pi_\alpha$. Then, clearly, the triplet $(\Pi_\alpha(\D), \H_\alpha, \D^\times)$ can be regarded as a RHS.
To be more definite we give the following
\bedefi Let  $(\D, \D^\times)$ be the joint topological limit of a directed contractive family $\{\H_\alpha, U_{\beta \alpha}, \alpha, \beta \in \F, \beta\geq \alpha\}$ of Hilbert spaces. We say that $\D, \D^\times$ are the extreme spaces of a RHS if there exists a Hilbert space $\H_0$, with inner product $\ip{\cdot}{\cdot}_0$, with the properties:
\begin{itemize}
\item[(i)] $\D$ is a dense subspace of $\H_0$;
\item[(ii)] for every $\alpha \in \F$, there exists an injective linear map $\sigma_\alpha:\H_\alpha\to \H_0$ such that $\sigma_\beta= \sigma_\alpha V_{\alpha\beta} $ if $\alpha \leq \beta $ and $\bigcup_{\alpha \in \F}\sigma_\alpha(\H_\alpha)$ is dense in $\H_0$;
\item[(iii)] for every $\eta \in \H_0$, the conjugate linear functional $F_\eta: \xi \to \ip{\eta}{\xi}_0$ is continuous on $\D[\sf{t_p}]$ and the linear map $J: \eta \in \H_0 \to F_\eta \in \D^\times$ is injective.
\end{itemize}
\findefi

In order to give a sufficient condition for $(\D, \D^\times)$ to be the extreme spaces of a RHS, we assume that
\begin{itemize}
\item[(A)] $\quad \{ (\xi_\alpha)\in \D:\, \inf_{\alpha\in \F} \|\xi_\alpha\|_\alpha=0\}=\{(0)\}.$ \end{itemize}
Then we put, for $(\xi_\alpha)\in \D$,
$$ \|(\xi_\alpha)\|_0=\inf_{\alpha\in \F}\|\xi_\alpha\|_\alpha.$$
Then it is easily seen that $\|\cdot\|_0$ is a norm on $\D$ and that it satisfies the parallelogram law. Hence it is possible to define on $\D$ an inner product $\ip{\cdot}{\cdot}_0$, which makes $\D$ into a pre-Hilbert space.
We denote by $\H_0$ the Hilbert space completion of $\D$.

\berem Assume $\{(\xi_\gamma)_n\}$ is a sequence in $\D$ such that $\|\cdot\|_0-\lim_{n\to \infty}(\xi_\gamma)_n=0$. We denote by $\{\xi_\gamma^n\}$ the corresponding sequence of $\gamma$ components of $\{(\xi_\gamma)_n\}$. Then, as it is easily seen, there exists $\overline{\alpha}\in \F$ such that $\xi_{\delta}^n \to 0$ in $\H_{\delta}$, for every $\delta\leq \overline{\alpha}$. But we have no information about the limit in $\H_\beta$, when $\beta \not \leq \overline{\alpha}$. For this reason we introduce the following condition (C), which expresses the {\em compatibility} of the norms.
\enrem
\begin{itemize}
\item[(C)] If $\{(\xi_\gamma)_n\}$ is a sequence in $\D$ such that $\|\cdot\|_0-\lim_{n\to \infty}(\xi_\gamma)_n=0$ and the sequence of $\alpha$ components $\{\xi_\alpha^n\}$ is Cauchy, for some $\alpha\in \F$,  then $\displaystyle\|\cdot\|_\alpha-\lim_{n\to\infty}\xi_\alpha^n =0$.
\end{itemize}
\begin{lemma}\label{lem_one} If condition (C) holds, then $\H_\alpha$ is isomorphic to a subspace of $\H_0$.
\end{lemma}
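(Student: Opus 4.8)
The plan is to construct, for each fixed $\alpha\in\F$, a natural injective linear map $\sigma_\alpha\colon\H_\alpha\to\H_0$ and then to identify $\H_\alpha$ with the subspace $\sigma_\alpha(\H_\alpha)$ of $\H_0$. Write $\iota\colon\D\to\H_0$ for the canonical embedding of $\D$ (carrying the norm $\|\cdot\|_0$) into its completion $\H_0$; it is injective because assumption (A) is exactly the statement that $\|\cdot\|_0$ is a norm. On the subspace $\Pi_\alpha(\D)$ of $\H_\alpha$, which is dense in $\H_\alpha$ by Lemma~\ref{lemma 3.1}(ii), I would set
\[
\sigma_\alpha\bigl(\Pi_\alpha(\xi_\beta)\bigr):=\iota\bigl((\xi_\beta)\bigr),\qquad (\xi_\beta)\in\D .
\]
This is well defined and linear because $\Pi_\alpha$ is injective, and it is a contraction since, for every $(\xi_\beta)\in\D$,
\[
\|\iota((\xi_\beta))\|_0=\|(\xi_\beta)\|_0=\inf_{\gamma\in\F}\|\xi_\gamma\|_\gamma\le\|\xi_\alpha\|_\alpha=\|\Pi_\alpha(\xi_\beta)\|_\alpha .
\]
As $\Pi_\alpha(\D)$ is dense in $\H_\alpha$ and $\H_0$ is complete, $\sigma_\alpha$ extends uniquely, still as a contraction, to all of $\H_\alpha$; I keep the name $\sigma_\alpha$ for the extension.

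The only step with real content is the injectivity of $\sigma_\alpha$, and this is precisely what condition (C) is designed to provide. Suppose $\zeta\in\H_\alpha$ with $\sigma_\alpha\zeta=0$. Using density, pick $(\xi_\beta)_n\in\D$ with $\xi_\alpha^n:=\Pi_\alpha\bigl((\xi_\beta)_n\bigr)\to\zeta$ in $\H_\alpha$. By continuity of $\sigma_\alpha$, $\iota\bigl((\xi_\beta)_n\bigr)=\sigma_\alpha(\xi_\alpha^n)\to\sigma_\alpha\zeta=0$ in $\H_0$, i.e.\ $\|\cdot\|_0\text{-}\lim_n(\xi_\beta)_n=0$; on the other hand $\{\xi_\alpha^n\}$, being convergent in $\H_\alpha$, is $\|\cdot\|_\alpha$-Cauchy. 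Condition (C) then yields $\|\cdot\|_\alpha\text{-}\lim_n\xi_\alpha^n=0$, so $\zeta=0$. Hence $\ker\sigma_\alpha=\{0\}$, and $\sigma_\alpha$ identifies $\H_\alpha$ with a subspace of $\H_0$, as claimed.

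A couple of remarks while the construction is fresh, not needed for the statement as such. First, (C) cannot be dropped: as the Remark preceding it notes, a $\|\cdot\|_0$-null sequence in $\D$ may have an $\alpha$-component with convergent, nonzero limit, and that limit is exactly a nonzero vector of $\ker\sigma_\alpha$; so (C) is the minimal hypothesis making the injectivity work, and it is the only genuine obstacle here. Second, the maps $\{\sigma_\alpha\}$ are coherent: for $\alpha\le\beta$ one has $V_{\alpha\beta}\circ\Pi_\beta=\Pi_\alpha$ by (I$_1$) of Theorem~\ref{thm_main}, so $\sigma_\alpha\circ V_{\alpha\beta}$ and $\sigma_\beta$ agree on the dense subspace $\Pi_\beta(\D)$ (both send $\Pi_\beta\bigl((\xi_\gamma)\bigr)$ to $\iota\bigl((\xi_\gamma)\bigr)$), hence $\sigma_\beta=\sigma_\alpha\circ V_{\alpha\beta}$ on $\H_\beta$; this is the compatibility in the definition of the extreme spaces of a RHS that the subsequent results will exploit. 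Everything else — well-definedness, linearity, the contraction bound, density of $\Pi_\alpha(\D)$, completeness of $\H_0$ — is routine.
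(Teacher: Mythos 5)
Your proof is correct and follows essentially the same route as the paper's: both define $\sigma_\alpha$ on the dense subspace $\Pi_\alpha(\D)$ via the contraction estimate $\|(\xi_\beta)\|_0=\inf_\gamma\|\xi_\gamma\|_\gamma\le\|\xi_\alpha\|_\alpha$, extend by continuity to $\H_\alpha$, and derive injectivity from condition (C) applied to a $\|\cdot\|_0$-null sequence whose $\alpha$-components are Cauchy. Your closing remark on the coherence $\sigma_\beta=\sigma_\alpha\circ V_{\alpha\beta}$ likewise matches the end of the paper's argument.
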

\begin{proof} Let $\xi_\alpha \in \H_\alpha$. By (ii) of Lemma \ref{lemma 3.1}, there exists a sequence $\{(\xi_\gamma)_n\}$ in $\D$ such that $\Pi_\alpha (\xi_\gamma)_n \to \xi_\alpha$; i.e., $\xi_\alpha^n \to \xi_\alpha$.
On the other hand, if $\epsilon>0$,
$$ \inf_\beta\|\xi_\beta^n - \xi_\beta^m\|_\beta \leq \|\xi_\alpha^n - \xi_\alpha^m \|_\alpha <\epsilon , \quad\mbox{for } n, m \mbox{ large enough}.$$
Thus $\{(\xi_\gamma)_n\}$ is Cauchy w.r. to $\|\cdot \|_0$. Let $\zeta_\alpha$ denote its limit in $\H_0$. It is easily seen that $\zeta_\alpha$ does not depend on the sequence $\{(\xi_\gamma)_n\}$ where we started from. Hence, we define the linear map $\sigma_\alpha$ by
$$ \sigma_\alpha: \xi_\alpha \in \H_\alpha \to \zeta_\alpha \in \H_0.$$
Now we show that $\sigma_\alpha$ is injective. Assume that
$\zeta_\alpha=0$. By definition, $\zeta_\alpha$ is the
$\|\cdot\|_0$-limit of a sequence $\{(\xi_\gamma)_n\}$ in $\D$ such
that $\Pi_\alpha (\xi_\gamma)_n =\xi_\alpha^n \to \xi_\alpha$. The
sequence $\{\xi_\alpha^n\}$ is then Cauchy and so, by (C),
$\xi_\alpha=0$. If $\gamma\leq \alpha$, then using the equality
$\Pi_\gamma=V_{\gamma\alpha}\circ\Pi_\alpha$ and the continuity of
$V_{\gamma\alpha}$ one easily proves that $\zeta_\gamma=
V_{\gamma\alpha} \zeta_\alpha$. Hence $\sigma_\alpha= \sigma_\gamma
V_{\gamma\alpha}$, if $\gamma\leq \alpha$.
\end{proof}

\begin{lemma}\label{lem_two} $\H_0$ can be identified with a subspace of $\D^\times$.
\end{lemma}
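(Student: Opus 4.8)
The statement to be proved is exactly condition (iii) in the definition of \emph{extreme spaces of a RHS}: I must produce an injective linear map $J\colon\H_0\to\D^\times$ carrying $\eta\in\H_0$ to the (continuous, conjugate–linear) functional $F_\eta\colon\xi\mapsto\ip{\eta}{\xi}_0$ on $\D[{\sf t_p}]$, viewed as an element of $\D^\times$. My plan rests on a single elementary observation: the Hilbert norm of $\H_0$ is dominated by each seminorm generating the projective topology ${\sf t_p}$. Indeed, for $(\xi_\alpha)\in\D$ and any fixed $\gamma\in\F$,
\[
\|(\xi_\alpha)\|_0=\inf_{\alpha\in\F}\|\xi_\alpha\|_\alpha\le\|\xi_\gamma\|_\gamma=\|\Pi_\gamma(\xi_\alpha)\|_\gamma .
\]
Hence $\|\cdot\|_0$ is ${\sf t_p}$–continuous on $\D$, so the canonical injection $\D[{\sf t_p}]\hookrightarrow\H_0$ is continuous, and it has dense range because $\D$ is dense in $\H_0$ by construction.

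With this in hand I would construct $J$ as follows. Fix $\eta\in\H_0$. By Cauchy--Schwarz in $\H_0$ and the displayed estimate, choosing any $\gamma\in\F$,
\[
|\ip{\eta}{\xi}_0|\le\|\eta\|_0\,\|\xi\|_0\le\|\eta\|_0\,\|\Pi_\gamma\xi\|_\gamma ,\qquad\xi\in\D ,
\]
so $F_\eta$ is continuous on $\D[{\sf t_p}]$. By Lemma \ref{lemma 3.1}(iii) there is a unique element of $\D^\times$, call it $J\eta$, representing $F_\eta$ through the pairing, so that $B(J\eta,\xi)=\ip{\xi}{\eta}_0$ for every $\xi\in\D$. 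The assignment $\eta\mapsto J\eta$ is clearly linear; moreover it is continuous from $\H_0$ into $\D^\times[\beta(\D^\times,\D)]$, since for every ${\sf t_p}$–bounded $M\subseteq\D$ one has $\sup_{\xi\in M}|B(J\eta,\xi)|\le\big(\sup_{\xi\in M}\|\xi\|_0\big)\,\|\eta\|_0$ with $\sup_{\xi\in M}\|\xi\|_0<\infty$ (again by ${\sf t_p}$–continuity of $\|\cdot\|_0$). Finally $J$ is injective: $J\eta=0$ forces $\ip{\eta}{\xi}_0=0$ for all $\xi\in\D$, and density of $\D$ in $\H_0$ yields $\eta=0$. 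Thus $\H_0$ is identified, via $J$, with the subspace $J(\H_0)$ of $\D^\times$, which is the claim.

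I do not expect a genuine obstacle here: the whole content is the trivial domination inequality above, the rest being a standard duality argument supported by Lemma \ref{lemma 3.1}(iii). The only point requiring a little care is the bookkeeping of linearity versus conjugate–linearity, so that $F_\eta$ is matched, through $B$, with a bona fide element of the \emph{conjugate} dual $\D^\times$. I would also be careful \emph{not} to claim compatibility of this embedding with the maps $\btheta_\alpha,\sigma_\alpha$ in the naive form $J\circ\sigma_\alpha=\btheta_\alpha$: using $\sigma_\alpha(\Pi_\alpha\zeta)=\zeta$ for $\zeta\in\D$ and (I$_4$) of Theorem \ref{thm_main}, that identity would amount (by polarization) to $\|\zeta\|_0=\|\Pi_\alpha\zeta\|_\alpha$ for all $\zeta\in\D$, which is generally false. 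So the realization of $\H_0$ in $\D^\times$ and the realizations of the individual $\H_\alpha$'s are different maps, and only the former is what Lemma \ref{lem_two} asserts.
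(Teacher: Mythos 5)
Your proof is correct and follows essentially the same route as the paper's: the key inequality $|\ip{\eta}{\xi}_0|\le\|\eta\|_0\|\xi\|_0\le\|\eta\|_0\|\xi_\gamma\|_\gamma$ gives ${\sf t_p}$-continuity of $F_\eta$, hence $J\eta\in\D^\times$, and injectivity of $J$ follows from the density of $\D$ in $\H_0$ (a point the paper asserts without comment but which you rightly justify). Your added remarks on continuity of $J$ and on not confusing $J\circ\sigma_\alpha$ with $\btheta_\alpha$ are correct but go beyond what the lemma requires.
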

\begin{proof} Let $\eta \in \H_0$. Then if $\xi=(\xi_\alpha)\in \D$ we have
$$ |\ip{\xi}{\eta}_0|\leq \|\xi\|_0\|\eta\|_0 \leq \|\xi_\alpha\|_\alpha \|\eta\|_0, \quad \forall \alpha \in \F.$$
Hence $\eta$ defines a conjugate linear functional $F_\eta$, continuous for the projective topology ${\sf t_p}$ of $\D$. The map $J:\eta \in \H_0 \to \F_\eta \in \D^\times$ is injective and, hence, $\H_0$ is identified with a subspace of $\D^\times$.
\end{proof}
Then, finally, we obtain
\begin{thm}\label{thm_rhs} If the joint topological limit $(\D, \D^\times)$  of a directed contractive family $\{\H_\alpha, U_{\beta \alpha}, \alpha, \beta \in \F, \beta\geq \alpha\}$ of Hilbert spaces satisfies the conditions (A) and (C), then $\D, \D^\times$ are the extreme spaces of a RHS.
\end{thm}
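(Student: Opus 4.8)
The plan is to verify, in order, the three clauses of the definition of ``extreme spaces of a RHS'' for the Hilbert space $\H_0$ already introduced before the statement as the completion of $\D$ with respect to the norm $\|(\xi_\alpha)\|_0=\inf_{\alpha\in\F}\|\xi_\alpha\|_\alpha$; recall that condition (A) guarantees this is a genuine norm and that it satisfies the parallelogram law, hence comes from an inner product $\ip{\cdot}{\cdot}_0$ making $\D$ a pre-Hilbert space. Clause (i) is then nothing but the statement that a pre-Hilbert space is dense in its completion. I would also remark that ${\sf t_p}$ is finer than the norm topology of $\H_0$ on $\D$, since $\|\xi\|_0\le\|\Pi_\alpha\xi\|_\alpha$ for every $\alpha$ and the maps $\xi\mapsto\|\Pi_\alpha\xi\|_\alpha$ form a defining family of seminorms for ${\sf t_p}$; this is what turns $(\D,\H_0,\D^\times)$ into an honest rigging, although it is implicit in clause (iii).

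For clause (ii) I would invoke Lemma \ref{lem_one}, which is exactly the place where hypothesis (C) enters: it furnishes, for each $\alpha\in\F$, an injective linear map $\sigma_\alpha:\H_\alpha\to\H_0$ with $\sigma_\beta=\sigma_\alpha V_{\alpha\beta}$ whenever $\alpha\le\beta$. The one point not already recorded is that $\bigcup_{\alpha\in\F}\sigma_\alpha(\H_\alpha)$ is dense in $\H_0$, and for this I would show that $\D$ itself lies inside every $\sigma_\alpha(\H_\alpha)$. Indeed, given $(\xi_\gamma)\in\D$ and $\alpha\in\F$, run the construction of $\sigma_\alpha$ from the proof of Lemma \ref{lem_one} on the constant sequence $(\xi_\gamma)_n\equiv(\xi_\gamma)$ — which is trivially $\|\cdot\|_0$-Cauchy and has $\Pi_\alpha$-components equal to $\Pi_\alpha(\xi_\gamma)$: since the element of $\H_0$ so produced does not depend on the approximating sequence, it equals the canonical image of $(\xi_\gamma)$ in $\H_0$, whence $\sigma_\alpha(\Pi_\alpha(\xi_\gamma))=(\xi_\gamma)$. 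Thus $\D\subseteq\sigma_\alpha(\H_\alpha)$ for every $\alpha$, and density of $\D$ in $\H_0$ (clause (i)) gives density of the union.

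Clause (iii) is precisely the content of Lemma \ref{lem_two}: for $\eta\in\H_0$ the bound $|\ip{\xi}{\eta}_0|\le\|\xi\|_0\|\eta\|_0\le\|\Pi_\alpha\xi\|_\alpha\|\eta\|_0$, holding for every $\alpha\in\F$, makes $F_\eta:\xi\mapsto\ip{\eta}{\xi}_0$ continuous on $\D[{\sf t_p}]$ — hence an element of $\D^\times$ by Lemma \ref{lemma 3.1}(iii) — and $J:\eta\mapsto F_\eta$ is injective because $F_\eta=0$ forces $\eta$ orthogonal to the dense subspace $\D$ of $\H_0$. Assembling (i), (ii) and (iii) yields the theorem.

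There is no hard core left at this stage: conditions (A) and (C) have already been spent in building $\H_0$ and in Lemmas \ref{lem_one}--\ref{lem_two}, so the argument is essentially an assembly job. The only step that is not literally a quotation of an earlier result is the density of $\bigcup_\alpha\sigma_\alpha(\H_\alpha)$ in $\H_0$, and even that reduces, via the constant-sequence remark, to checking that the embedding of $\D$ into $\H_0$ coming from the completion is compatible with the embeddings $\Pi_\alpha:\D\to\H_\alpha$ followed by $\sigma_\alpha:\H_\alpha\to\H_0$; this compatibility is the one small point I would take care to spell out.
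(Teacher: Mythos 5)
Your proposal is correct and follows essentially the same route as the paper, which obtains the theorem by assembling the construction of $\H_0$ from condition (A) with Lemma \ref{lem_one} (for clause (ii), using (C)) and Lemma \ref{lem_two} (for clause (iii)). Your constant-sequence argument showing $\D\subseteq\sigma_\alpha(\H_\alpha)$, hence density of $\bigcup_\alpha\sigma_\alpha(\H_\alpha)$ in $\H_0$, is a correct filling-in of a detail the paper leaves implicit.
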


\beex Let us consider the family of Hilbert spaces $\{\H_\alpha\}$ where $\H_\alpha:= L^2([\alpha, +\infty), dx)$, $\alpha \in {\mb R}$.
We define
$U_{\beta\alpha}: L^2([\alpha, +\infty)) \to L^2([\beta, +\infty))$ by
$$(U_{\beta\alpha}f)(x)=f(x-\beta+\alpha),\; \;f \in L^2([\alpha, +\infty)),\, \beta\geq \alpha.$$
Then, as it is easily checked,  the $U_{\beta\alpha}$'s are unitary
operators. The procedure outlined produces in this case the usual
inductive limit and then it gives as final result only one Hilbert
space, $\D= \D^\times =L^2 (\mb R)$. \enex \beex A more interesting
example is obtained by considering the family of Hilbert spaces
$\{\H_\alpha\}$ with
$\H_\alpha:= L^2({\mb R}, (1+|x|^\alpha) dx)$, $\alpha \in {\mb R}^+\cup \{0\}$.\\
We define
$U_{\beta\alpha}: L^2({\mb R}, (1+|x|^\alpha) dx) \to L^2({\mb R}, (1+|x|^\beta) dx))$ by
$$(U_{\beta\alpha}f)(x)= \frac{1+|x|^\alpha}{1+|x|^\beta}f(x),\;\; f \in L^2({\mb R}, (1+|x|^\alpha) dx),\, \beta\geq \alpha.$$
In this case the $U_{\beta\alpha}$'s are only contractions and $V_{\alpha\beta}=I_{\alpha\beta}$ (the identity of $\H_\beta$ into $\H_\alpha$), as can be easily computed.
The space $\D$ can be identified with the space $\D^\infty (Q)$ where $Q$ is the operator of multiplication by $x$ in $L^2({\mb R}, dx)$ defined in the dense domain $D(Q)=\{f \in L^2({\mb R}, dx): xf \in L^2({\mb R}, dx)\}$. Indeed,
\begin{eqnarray*} \D &=& \bigcap_{\alpha \in {\mb R}^+\cup \{0\}}L^2({\mb R}, (1+|x|^\alpha) dx) \\
&=& \bigcap_{\alpha \in {\mb R}^+\cup \{0\}}D((I+|Q|^\alpha)^{1/2})= \bigcap_{n \in {\mb N}}D(Q^n)= \D^\infty (Q).\end{eqnarray*}
The space $\D^\times$ can be described as follows:
$$\D^\times = \{f \mbox{ measurable }:\, (I+|Q|^\alpha)^{-1/2} f \in L^2({\mb R}, dx)\}.$$
Both conditions (A) and (C) are satisfied in this example, so, according to Theorem \ref{thm_rhs}, the construction of the joint topological limit generates the rigged Hilbert space
$\D \hookrightarrow L^2({\mb R}, dx)\hookrightarrow \D^\times$.
\enex

\beex \label{ex_main}Let $\D_0$ be a dense domain in Hilbert space $\H$, with norm $\| \cdot\|$, and ${\mc O}$ a family of closable operators
 with domain $\D_0$; in this case we say that ${\mc O}$ is an {\em O-family}.
 Assume that ${\mc O}$ is a directed set for the order relation $A \preceq B \, \Leftrightarrow\, \|A\xi\|\leq \|B\xi\|, \forall \xi \in \D_0$. Let us consider the family of Hilbert spaces $\{\H_A;\, A \in {\mc O}\}$, where $\H_A$ is the completion of $\D_0$ under the norm
$$\|\xi\|_A =\sqrt{\|\xi\|^2 + \|A\xi\|^2}= \|(I+A^*\overline{A})^{1/2}\xi\|,\quad \xi \in \D_0.$$ It is easily seen that if $A \preceq B$, $\H_B\subseteq \H_A$.\\
  For shortness, we put  $S_A:=I+A^*\overline{A}$. Then, $S_A$ has a bounded inverse $S_A^{-1}$. Moreover, for every $A,B \in {\mc O}$ with $B\succeq A$,  the operator $S_A^{1/2}S_B^{-1/2}$ is also bounded.\\
 Now we need to define the maps $U_{BA}$, when $A \preceq B$.

For this, let us consider an element $\xi_A \in \H_A$. Then $\xi_A$  defines a bounded conjugate linear
functional $F_{\xi_A}$ on $\H_A$ by
$$F_{\xi_A}(\eta)=\ip{\xi_A}{\eta}_A,\quad\eta\in \H_A.$$
If now $\eta\in \H_B$, $B\succeq A$, we have
$$|F_{\xi_A}(\eta)|=|\ip{\xi_A}{\eta}_A|\leq\|\xi_A\|_A\|\eta\|_A\leq\|\xi_A\|_A\|\eta\|_B.$$
Hence $F_{\xi_A}\upharpoonright\H_B$ is a bounded conjugate linear
function on $\H_B$, thus there exists a unique $\xi_B\in\H_B$ such
that
$$\ip{\xi_A}{\eta}_A=\ip{\xi_B}{\eta}_B\qquad\forall\eta\in\H_B;$$
it result also that $\|\xi_B\|_B\leq\|\xi_A\|_A$.

We define $U_{BA}\xi_A:=\xi_B.$ Then $U_{BA}$ ($A \preceq B$) is a contraction; indeed,
$\|U_{BA}\xi_A\|_B=\|\xi_B\|_B\leq\|\xi_A\|_A.$

Now we show that, for $A \preceq B$,
$$ U_{BA}= S_B^{-1/2}(S_A^{1/2}S_B^{-1/2})^*S_A^{1/2}.$$

Indeed, we have
\begin{eqnarray*}
\ip{\xi_A}{\eta}_A &=&  \ip{S_A^{1/2}\xi_A}{S_A^{1/2}\eta}=\ip{S_A^{1/2}\xi_A}{S_A^{1/2}(S_B^{-1/2}S_B^{1/2})\eta}=\\
 &=&\ip{S_A^{1/2}\xi_A}{(S_A^{1/2}S_B^{-1/2})(S_B^{1/2}\eta)}\\
  &=&\ip{(S_B^{1/2}S_B^{-1/2})(S_A^{1/2}S_B^{-1/2})^*S_A^{1/2}\xi_A}{S_B^{1/2}\eta}\\
   &=&\ip{S_B^{-1/2}(S_A^{1/2}S_B^{-1/2})^*S_A^{1/2}\xi_A}{\eta}_B.
\end{eqnarray*}

 Then, $U_{BA}:\H_A \to \H_B$, for $B\succeq A$, and satisfies the conditions given at beginning of Section 2; hence, the family $\{\H_A, U_{BA}; A,B \in  {\mc O}, A \preceq B\}$ is a directed contractive family of Hilbert spaces.
It is easily seen that $V_{AB}=I_{AB}$, the identity operator
from $\H_B$ into $\H_A$. \\

Let $\D=\bigcap_{A\in {\mc O}}\H_A$ be endowed with the
graph topology ${\sf t}_{\mc O}$ defined by the family of
(semi)norms $\{\|\cdot\|_A, \, A \in {\mc O}\}$  and let
$\D^\times$ be the conjugate dual of $\D[{\sf t}_{\mc O}]$. Then, as it is well known, ${\sf t}_{\mc O}$ is nothing but the the projective topology defined by the spaces
$\H_A[\|\cdot\|_A]$ and $\D^\times$ can be viewed as the inductive limit of their conjugate duals $\H_A^\times[\|\cdot\|_A^\times]$, where $\|\cdot\|_A^\times$ stands for the dual norm.

An explicit form of the map $\btheta_A$ which embeds $\H_A$ into $\D^\times$, can be easily obtained by using the fact that every operator $S_A^{1/2}$, $A\in {\mc O}$, which is continuous from $\H_A[\|\cdot\|_A]$ into $\H[\|\cdot\|]$ has a continuous adjoint $(S_A^{1/2})^\times: \H[\|\cdot\|] \to \H_A^\times[\|\cdot\|_A^\times] $ (extending the Hilbert adjoint $(S_A^{1/2})^*= S_A^{1/2}$).
Indeed, let $\xi_A\in\H_A$ and consider the functional
$$G_{\xi_A}(\eta)=\ip{\xi_A}{\eta}_A,\qquad\eta\in\D.$$ Since $G_{\xi_A}$ is
continuous in the projective topology  $\sf{t_p}$, there exists
a unique $\bar{\xi}_A\in\D^\times$ such that
$$\ip{\bar{\xi}_A}{\eta}=\ip{\xi_A}{\eta}_A=\ip{S_A^{1/2}\xi_A}{S_A^{1/2}\eta}\qquad \forall\eta\in\D.$$
Then $\btheta_A \xi_A:= \bar{\xi}_A$ defines the natural embedding of $\H_A$ into $\D^\times$.
Taking into account the previous remark on the operator $S_A^{1/2}$,
we can then write $\btheta_{A}\xi_A=(S_A^{1/2})^\times
S_A^{1/2}\xi_A = \bar{\xi}_A \in\H_A^\times \subset \D^\times$. Further, from
\begin{eqnarray*}
  \ip{(S_A^{1/2})^\times S_A^{1/2}\xi_A}{\eta}&=& \ip{S_A^{1/2}\xi_A}{S_A^{1/2}\eta}= \ip{U_{BA}\xi_A}{\eta}_B\\
&=&\ip{S_B^{1/2}U_{BA}\xi_A}{S_B^{1/2}\eta}=\ip{(S_B^{1/2})^\times
S_B^{1/2}U_{BA}\xi_A}{\eta}.
\end{eqnarray*}
it follows that
$\btheta_{A}=(S_A^{1/2})^\times S_A^{1/2}=(S_B^{1/2})^\times S_B^{1/2}U_{BA}=\btheta_{B}U_{BA}$.
 \enex
The situation described in Example \ref{ex_main} is indeed the most
general when a directed contractive family of Hilbert spaces defines
a rigged Hilbert space, as the next theorem shows.
\begin{thm}
Let $(\D, \D^\times)$ be the joint topological limit  of a directed contractive family $\{\H_\alpha, U_{\beta \alpha}, \alpha, \beta \in \F, \beta\geq \alpha\}$ of Hilbert spaces. Assume that the conditions (A) and (C) are satisfied. Denote by $\H_0[\|\cdot\|_0]$ the Hilbert space which makes of $(\D, \H_0, \D^\times)$ a rigged Hilbert space. Then the following statements hold.
\begin{itemize}
  \item[(i)] For every $\alpha \in \F$, there exists a linear operator $A_\alpha$ with domain $\D$, closable in $\H_0$ such that $\H_\alpha$ is the completion $\H_{A_\alpha}$ of $\D$ under the norm $\|\xi\|_{A_\alpha}=\|(I+A_\alpha^*\overline{A}_\alpha)^{1/2}\xi\|_0$, $\xi \in \D$.
  \item[(ii)] The family ${\mc O}=\{A_\alpha,\, \alpha \in \F\} $ is directed upward by $\F$ (i.e., $\alpha \leq \beta \Leftrightarrow A_\alpha\preceq A_\beta$).
  \item[(iii)]$\D=\bigcap_{\alpha \in \F}\H_{A_\alpha} $ and $\D^\times=\bigcup_{\alpha \in \F}\btheta_{A_\alpha}(\H_{A_\alpha})$ is the conjugate dual of $\D$ for the graph topology ${\sf t}_{\mc O}$. The inductive topology of $\D^\times$ coincides with the Mackey topology $\tau(\D^\times, \D)$.
\end{itemize}
\end{thm}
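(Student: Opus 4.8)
The plan is to reverse the construction of Example~\ref{ex_main}: for each $\alpha\in\F$ I recover an operator $A_\alpha$ from the inner product $\ip{\cdot}{\cdot}_\alpha$, now read as a closed positive sesquilinear form on the central space $\H_0$. Condition (A) already guarantees that $\|\cdot\|_0$ is a genuine norm on $\D$, so that $\D\hookrightarrow\H_0$; condition (C) is precisely the closability, in $\H_0$, of the form $\xi\mapsto\|\xi_\alpha\|_\alpha^2$, or, equivalently (Lemma~\ref{lem_one}), it makes the maps $\sigma_\alpha:\H_\alpha\to\H_0$ well defined and injective, with $\sigma_\alpha\circ\Pi_\alpha$ equal to the inclusion $\D\hookrightarrow\H_0$ and $\|\sigma_\alpha\xi_\alpha\|_0\leq\|\xi_\alpha\|_\alpha$. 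Throughout I identify $\H_\alpha$ with the subspace $\sigma_\alpha(\H_\alpha)\subseteq\H_0$ and $\D$ with its image in $\H_0$.

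For (i): on the dense domain $D(t_\alpha):=\sigma_\alpha(\H_\alpha)$ set $t_\alpha(u,v):=\ip{\sigma_\alpha^{-1}u}{\sigma_\alpha^{-1}v}_\alpha$. This form is positive, dominates $\|\cdot\|_0^2$, and is \emph{closed}: its form norm is equivalent to the transported $\|\cdot\|_\alpha$-norm, in which $\sigma_\alpha(\H_\alpha)$ is complete. The representation theorem for closed positive forms yields a self-adjoint $S_\alpha\geq I$ in $\H_0$ with $D(S_\alpha^{1/2})=\sigma_\alpha(\H_\alpha)$ and $t_\alpha(u,v)=\ip{S_\alpha^{1/2}u}{S_\alpha^{1/2}v}_0$. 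Put $A_\alpha:=(S_\alpha-I)^{1/2}\upharpoonright\D$; since $\D\subseteq D(S_\alpha^{1/2})=D((S_\alpha-I)^{1/2})$ its domain is $\D$, and being a restriction of a self-adjoint operator it is closable in $\H_0$. Using the identity $\|(I+A_\alpha^*\overline{A_\alpha})^{1/2}\xi\|_0^2=\|\xi\|_0^2+\|A_\alpha\xi\|_0^2$ for $\xi\in\D$ (as in Example~\ref{ex_main}), together with $\|A_\alpha\xi\|_0^2=\|(S_\alpha-I)^{1/2}\xi\|_0^2=\|\xi_\alpha\|_\alpha^2-\|\xi\|_0^2$ for $\xi=(\xi_\gamma)\in\D$, I obtain $\|\xi\|_{A_\alpha}=\|\xi_\alpha\|_\alpha$ on $\D$; since $\Pi_\alpha(\D)$ is $\|\cdot\|_\alpha$-dense in $\H_\alpha$ by Lemma~\ref{lemma 3.1}(ii), the completion $\H_{A_\alpha}$ is (isometrically) $\H_\alpha$.

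For (ii): if $\alpha\leq\beta$ and $\xi=(\xi_\gamma)\in\D$ then $\xi_\alpha=V_{\alpha\beta}\xi_\beta$ with $V_{\alpha\beta}$ a contraction, so $\|A_\alpha\xi\|_0^2=\|\xi_\alpha\|_\alpha^2-\|\xi\|_0^2\leq\|\xi_\beta\|_\beta^2-\|\xi\|_0^2=\|A_\beta\xi\|_0^2$, i.e.\ $A_\alpha\preceq A_\beta$; since $\F$ is directed and $\alpha\mapsto A_\alpha$ is order preserving, ${\mc O}$ is directed. For (iii): the graph topology ${\sf t}_{\mc O}$ on $\D$ is generated by the seminorms $\|\cdot\|_{A_\alpha}=\|\cdot\|_\alpha$, hence coincides with ${\sf t_p}$, so by Lemma~\ref{lemma 3.1}(iii) its conjugate dual is $\D^\times$. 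Since $\btheta_{A_\alpha}(\H_{A_\alpha})$ consists exactly of those $F\in\D^\times$ whose defining functional on $\D$ is continuous for the single seminorm $\|\cdot\|_{A_\alpha}=\|\cdot\|_\alpha$, it coincides with $\btheta_\alpha(\H_\alpha)$, whence $\bigcup_\alpha\btheta_{A_\alpha}(\H_{A_\alpha})=\bigcup_\alpha\btheta_\alpha(\H_\alpha)=\D^\times$ by (I$_3$) of Theorem~\ref{thm_main}. The identity $\D=\bigcap_\alpha\H_{A_\alpha}$, the intersection being taken inside $\H_0$ under the identifications $\H_{A_\alpha}=\sigma_\alpha(\H_\alpha)$, follows because for $\zeta$ in the intersection, writing $\zeta=\sigma_\alpha\xi_\alpha$ for each $\alpha$ and using $\sigma_\beta=\sigma_\alpha V_{\alpha\beta}$ with injectivity of $\sigma_\alpha$ forces $\xi_\alpha=V_{\alpha\beta}\xi_\beta$ whenever $\alpha\leq\beta$, so that $(\xi_\gamma)\in\D$ and $\zeta$ is its image in $\H_0$. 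Finally, to see ${\sf t_i}=\tau(\D^\times,\D)$ I first check that $(\D^\times[{\sf t_i}])'$ is (conjugate-linearly) $\D$: a ${\sf t_i}$-continuous functional composes with each $\btheta_\alpha$ to a bounded functional on $\H_\alpha$, hence to a vector $w_\alpha\in\H_\alpha$, and $\btheta_\alpha=\btheta_\beta U_{\beta\alpha}$ forces $w_\alpha=V_{\alpha\beta}w_\beta$, so $(w_\gamma)\in\D$; then Mackey--Arens gives ${\sf t_i}\leq\tau(\D^\times,\D)$, while each $\btheta_\alpha$ is continuous for the weak topologies by (I$_4$), hence for the Mackey topologies, so $\tau(\D^\times,\D)$ makes every $\btheta_\alpha$ continuous and therefore $\tau(\D^\times,\D)\leq{\sf t_i}$ by the universal property of the inductive topology.

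The main obstacle is part (i): one must be sure the form $t_\alpha$ is genuinely \emph{closed}, not merely closable --- this is exactly the content of (C), for without it $t_\alpha$ need not be closable and no self-adjoint $S_\alpha$, hence no $A_\alpha$, would be available --- and one must handle with care the domain bookkeeping behind $\|(I+A_\alpha^*\overline{A_\alpha})^{1/2}\xi\|_0^2=\|\xi\|_0^2+\|A_\alpha\xi\|_0^2$. Conditions (A) and (C) play complementary roles here: (A) produces $\H_0$, while (C) both produces the embeddings $\sigma_\alpha$ and secures the closedness just mentioned. Once $A_\alpha$ and the identification $\H_{A_\alpha}=\H_\alpha$ are in place, parts (ii) and (iii) are the bookkeeping above together with two standard facts of locally convex duality (Mackey--Arens and the universal property of the inductive limit).
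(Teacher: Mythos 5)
Your proof is correct and follows essentially the same route as the paper: part (i) is obtained, exactly as in the paper, by reading $\ip{\cdot}{\cdot}_\alpha$ as a closed positive form on a dense subspace of $\H_0$ (using (C), via the injective embeddings $\sigma_\alpha$, to make this legitimate), invoking the representation theorem to get a self-adjoint $S_\alpha=B_\alpha^2\geq I$, and setting $A_\alpha=(S_\alpha-I)^{1/2}\upharpoonright\D$. For (ii) and (iii) the paper only says the claims ``follow from simple considerations'' and cites Schaefer for the Mackey statement; your explicit verifications (the contraction estimate for $A_\alpha\preceq A_\beta$, the identification $\D=\bigcap_\alpha\sigma_\alpha(\H_\alpha)$, and the Mackey--Arens argument) are correct fillings-in of those omitted details rather than a different method.
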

\begin{proof} (i): Since $\|\cdot\|_0=\inf_\alpha \|\cdot\|_\alpha$, it follows that, for every $\alpha \in \F$, the inner product $\ip{\cdot}{\cdot}_\alpha$ of $\H_\alpha$ can be viewed as a closed positive sesquilinear form defined on $\H_\alpha \times \H_\alpha \subset \H_0\times \H_0$ (up to an isomorphism) which, as a quadratic form on $\H_\alpha$, has $1$ as greatest lower bound. Then there exists a selfadjoint operator $B_\alpha$ with dense domain $D(B_\alpha)$ in $\H_0$, with $B_\alpha \geq I$, such that
\begin{align*}
& D(B_\alpha)=\H_\alpha \\
& \ip{\xi}{\eta}_\alpha = \ip{B_\alpha\xi}{B_\alpha\eta}_0, \quad
\forall \xi, \eta \in \H_\alpha.
\end{align*}
Since (the image of) $\D$ is dense in $\H_\alpha$, $\D$ is a core for $B_\alpha$ and, hence, also for the operator $(B_\alpha^2-I)^{1/2}$. We define $A_\alpha = (B_\alpha^2-I)^{1/2}\upharpoonright \D$.
The proofs of (ii) and (iii) follow, now, from simple considerations. In particular, the fact that the inductive topology of $\D^\times$ coincides with the Mackey topology $\tau(\D^\times, \D)$ is a well-known fact \cite[Ch. IV, Sec. 4.4]{scaefer} or \cite[Sec. 2.3]{at_pip}.
\end{proof}

\section{Inductive limit of operators}\label{sect4}
Once the joint topological limit of a family of Hilbert spaces is at hand, it is natural to look at operators acting on it and characterize those which can be expressed as inductive limits of bounded operators on the Hilbert spaces entering the construction.

\medskip Let $(\D, \D^\times)$ be the joint topological limit  of a directed contractive family \linebreak $\{\H_\alpha, U_{\beta \alpha}, \alpha, \beta \in \F, \beta\geq \alpha\}$ of Hilbert spaces.
We denote by $\LBDD{}$ the space of all linear maps $X:\D\to\D^\times$ such that there exists $\gamma\in \F$ and $C>0$ such that
\begin{equation}\label{eq_ineduality}|B(X(\eta_\alpha), (\xi_\alpha))|\leq C \|\xi_\gamma\|_\gamma \|\eta_\gamma\|_\gamma , \quad \forall (\xi_\alpha), (\eta_\alpha)\in \D.\end{equation}

\begin{prop}\label{prop_operators1}
Assume that, for each $\alpha \in \F$, an operator $X_\alpha\in \BB(\H_\alpha)$ is given and that there exist $\overline \alpha\in \F$ for which  $X_\beta= U_{\beta\alpha}X_\alpha V_{\alpha\beta}$, $\overline{\alpha}\leq \alpha \leq \beta$. Then there exists a unique linear map $X \in \LBDD{}$ such that $X(\xi_\gamma)= \btheta_\beta X_\beta\Pi_\beta(\xi_\gamma)$ whenever $\beta\geq \overline{\alpha}$.
The map $X$ is called the inductive limit of the operators $X_\alpha$ and denoted by $X=\varinjlim X_\alpha$.
\end{prop}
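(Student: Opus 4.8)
The plan is to define $X$ on $\D$ by the prescribed formula using a sufficiently large index $\beta$, show that this definition is independent of the choice of $\beta\geq\overline{\alpha}$, verify that the resulting $X$ is a well-defined linear map from $\D$ into $\D^\times$ satisfying the inequality \eqref{eq_ineduality}, and finally argue uniqueness. First I would fix $(\xi_\gamma)\in\D$ and, for each $\beta\geq\overline{\alpha}$, consider the element $\btheta_\beta X_\beta\Pi_\beta(\xi_\gamma)\in\D^\times$; the point of the compatibility hypothesis $X_\beta=U_{\beta\alpha}X_\alpha V_{\alpha\beta}$ is precisely to make these agree. Indeed, for $\overline{\alpha}\leq\alpha\leq\beta$, using the relations $\btheta_\alpha=\btheta_\beta\circ U_{\beta\alpha}$ from (I$_2$) of Theorem~\ref{thm_main} and $\Pi_\alpha=V_{\alpha\beta}\circ\Pi_\beta$ from (I$_1$), one computes
$$
\btheta_\alpha X_\alpha\Pi_\alpha(\xi_\gamma)=\btheta_\beta U_{\beta\alpha}X_\alpha V_{\alpha\beta}\Pi_\beta(\xi_\gamma)=\btheta_\beta X_\beta\Pi_\beta(\xi_\gamma),
$$
so the expression is stable under passing to a larger index that is $\geq\overline{\alpha}$. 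Since $\F$ is directed, any two indices $\beta_1,\beta_2\geq\overline{\alpha}$ have a common upper bound, whence the value is the same for all $\beta\geq\overline{\alpha}$; this legitimizes the definition $X(\xi_\gamma):=\btheta_\beta X_\beta\Pi_\beta(\xi_\gamma)$ and linearity of $X$ follows from linearity of $\btheta_\beta$, $X_\beta$, and $\Pi_\beta$.

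Next I would establish that $X$ actually lands in $\LBDD{}$, i.e.\ that \eqref{eq_ineduality} holds with $\gamma$ replaced by $\overline{\alpha}$ and $C=\|X_{\overline{\alpha}}\|$. For $(\xi_\alpha),(\eta_\alpha)\in\D$, using (I$_4$) of Theorem~\ref{thm_main} to evaluate the sesquilinear form on an element of $\D^\times$ lying in $\btheta_{\overline{\alpha}}(\H_{\overline{\alpha}})$, together with the definitions $\Pi_{\overline{\alpha}}(\xi_\alpha)=\xi_{\overline{\alpha}}$ and $\Pi_{\overline{\alpha}}(\eta_\alpha)=\eta_{\overline{\alpha}}$, one gets
$$
B\big(X(\eta_\alpha),(\xi_\alpha)\big)=B\big(\btheta_{\overline{\alpha}}X_{\overline{\alpha}}\Pi_{\overline{\alpha}}(\eta_\alpha),(\xi_\alpha)\big)=\ip{\Pi_{\overline{\alpha}}(\xi_\alpha)}{X_{\overline{\alpha}}\eta_{\overline{\alpha}}}_{\overline{\alpha}}=\ip{\xi_{\overline{\alpha}}}{X_{\overline{\alpha}}\eta_{\overline{\alpha}}}_{\overline{\alpha}},
$$
and the Cauchy--Schwarz inequality in $\H_{\overline{\alpha}}$ together with boundedness of $X_{\overline{\alpha}}$ gives $|B(X(\eta_\alpha),(\xi_\alpha))|\leq\|X_{\overline{\alpha}}\|\,\|\xi_{\overline{\alpha}}\|_{\overline{\alpha}}\|\eta_{\overline{\alpha}}\|_{\overline{\alpha}}$, as required. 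Incidentally, this computation also confirms that $X(\xi_\gamma)$ is genuinely an element of $\D^\times$ (it manifestly is, being $\btheta_\beta$ applied to a vector of $\H_\beta$), and that the value $B(X(\eta_\alpha),(\xi_\alpha))$ does not depend on the representative used for the index, which is consistent with the independence already proved.

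Finally, for uniqueness, suppose $X'\in\LBDD{}$ also satisfies $X'(\xi_\gamma)=\btheta_\beta X_\beta\Pi_\beta(\xi_\gamma)$ for $\beta\geq\overline{\alpha}$; then $X$ and $X'$ agree on all of $\D$ by construction, hence $X=X'$ as maps $\D\to\D^\times$. The only genuine subtlety in the whole argument is the well-definedness step, i.e.\ checking that the naive formula in terms of $\beta$ does not depend on $\beta$; everything else is a routine unwinding of the identities (I$_1$)--(I$_4$) from Theorem~\ref{thm_main}. I expect the verification of the compatibility cocycle computation displayed above to be the main (and only) point requiring care, since it is exactly where the hypothesis $X_\beta=U_{\beta\alpha}X_\alpha V_{\alpha\beta}$ is consumed; once that is in place, membership in $\LBDD{}$ and uniqueness are immediate.
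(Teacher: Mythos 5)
Your proof is correct and follows essentially the same route as the paper's: both hinge on the observation that the hypothesis $X_\beta=U_{\beta\alpha}X_\alpha V_{\alpha\beta}$ together with (I$_1$)--(I$_2$) makes $\btheta_\beta X_\beta\Pi_\beta$ independent of $\beta\ge\overline{\alpha}$, after which membership in $\LBDD{}$ follows from (I$_4$) and the Cauchy--Schwarz inequality. The only difference is cosmetic: the paper carries out the well-definedness check at the level of explicit representatives in $\S/\S_0$, whereas you work directly with the identities of Theorem~\ref{thm_main}.
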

\begin{proof} Let $(\xi_\gamma) \in \D$. Consider the family of vectors $(\xi'_\gamma)$ with

$$ \xi'_\gamma:=\left\{  \begin{array}{ll} X_\gamma\xi_\gamma & \mbox{if } \gamma\geq \overline{\alpha} \\
  0 & \mbox{otherwise.}
 \end{array}
\right. $$
Then,
$(\xi'_\gamma)\in \S$. Indeed, if $\gamma\geq \beta\geq \overline{\alpha}$,  $\xi'_\gamma = X_\gamma \xi_\gamma=U_{\gamma\beta}X_\beta V_{\beta\gamma}\xi_\gamma=U_{\gamma\beta}X_\beta\xi_\beta.$

If $\xi_\alpha\in \H_\alpha$, then $X_\alpha \xi_\alpha \in \H_\alpha$ and
$\Theta_\alpha X_\alpha\xi_\alpha= (\eta_\gamma)$, with $\eta_\gamma$ defined as in \eqref{defn_theta}.\\
So that, if $\beta\geq \alpha \geq \overline{\alpha}$,
$$\eta_\beta =\left\{
                                    \begin{array}{ll}
                                      U_{\beta\alpha}X_\alpha V_{\alpha\beta}\xi_\beta & \mbox{if }\beta\geq \alpha \\
                                      0 & \mbox{otherwise.}
                                    \end{array}
                                  \right.$$
Hence, $\xi'_\beta= \eta_\beta$, $\beta\geq \overline{\alpha}$. Therefore, if we define
$ X(\xi_\beta)= \jj{(\eta_\gamma)}$, $X$ is a linear map from $\D$ into $\D^\times$ and
$$ X(\xi_\beta)= \btheta_{\alpha}X_\alpha\Pi_\alpha (\xi_\beta),\quad \beta,\,\alpha \geq \overline{\alpha}.$$
 Moreover,
$$|B(X(\xi_\beta),(\zeta_\beta))|=\lim_\alpha |\ip{\zeta_\alpha}{\eta_\alpha}_\alpha|\leq \|X_\delta\|_{\delta\delta}\|\zeta_\delta\|_\delta\|\xi_\delta\|_\delta, \; (\zeta_\beta),(\xi_\beta)\in \D,\, \delta\geq \overline{\alpha},$$
where $\|X_\delta\|_{\delta\delta}$ denotes the norm of $X_\delta$ in $\BB(\H_\delta)$, $\delta \in \F.$\\
This implies that $X \in \LBDD{}$. The uniqueness is clear, so the  statement is proved.
\end{proof}

\begin{prop}\label{prop_operators2} Let $X \in \LBDD{}$ and ${\sf d}(X)$ the set of $\gamma \in \F$ for which the inequality \eqref{eq_ineduality} holds.  Then for each $\gamma \in {\sf d}(X)$ there exists a bounded operator $X_\gamma$ in $\H_\gamma$ such that
$$ B(X(\eta_\alpha), (\xi_\alpha)) = \ip{\xi_\gamma}{X_\gamma\eta_\gamma}_\gamma, \; \xi_\gamma, \eta_\gamma \in \H_\gamma.$$
Putting $X_\beta =0$ if $\beta \not\in {\sf d}(X)$, then $X=\varinjlim X_\gamma$.
\end{prop}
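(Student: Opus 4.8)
The plan is to read off $X_\gamma$, for each admissible index $\gamma$, as the Riesz operator of a bounded sesquilinear form on $\H_\gamma$ built from the ``matrix elements'' $B(X(\eta_\alpha),(\xi_\alpha))$, and then to recognize $X$ as the inductive limit of the $X_\gamma$ via Proposition~\ref{prop_operators1}.

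First I would fix $\gamma\in{\sf d}(X)$. Since $\Pi_\gamma:\D\to\H_\gamma$ is injective, each $\xi_\gamma\in\Pi_\gamma(\D)$ is the $\gamma$-component of a unique $\widehat\xi\in\D$, so the formula $\Phi(\xi_\gamma,\eta_\gamma):=B\big(X(\widehat\eta),\widehat\xi\big)$ defines, with no choice involved, a form on $\Pi_\gamma(\D)\times\Pi_\gamma(\D)$; it is linear in $\xi_\gamma$ and conjugate-linear in $\eta_\gamma$ because $X$ is linear and $B$ is conjugate-linear in its first argument. By \eqref{eq_ineduality} one has $|\Phi(\xi_\gamma,\eta_\gamma)|\le C\|\xi_\gamma\|_\gamma\|\eta_\gamma\|_\gamma$, and since $\Pi_\gamma(\D)$ is dense in $\H_\gamma$ by Lemma~\ref{lemma 3.1}(ii), $\Phi$ extends uniquely to a bounded sesquilinear form on $\H_\gamma$. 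Applying the Riesz representation theorem (in the second variable) produces a unique $X_\gamma\in\BB(\H_\gamma)$ with $\Phi(\xi_\gamma,\eta_\gamma)=\ip{\xi_\gamma}{X_\gamma\eta_\gamma}_\gamma$, and restricting back to $\Pi_\gamma(\D)$ gives precisely $B(X(\eta_\alpha),(\xi_\alpha))=\ip{\Pi_\gamma(\xi_\alpha)}{X_\gamma\Pi_\gamma(\eta_\alpha)}_\gamma$ for all $(\xi_\alpha),(\eta_\alpha)\in\D$, which is the first assertion.

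Next I would observe that ${\sf d}(X)$ is an upper set in $\F$: if $\gamma\le\gamma'$ and $\gamma\in{\sf d}(X)$, then $\Pi_\gamma=V_{\gamma\gamma'}\circ\Pi_{\gamma'}$ with $V_{\gamma\gamma'}$ a contraction, so $\|\Pi_\gamma(\xi_\alpha)\|_\gamma\le\|\Pi_{\gamma'}(\xi_\alpha)\|_{\gamma'}$ and \eqref{eq_ineduality} persists with $\gamma'$ and the same $C$. Choose any $\overline\alpha\in{\sf d}(X)$ (nonempty, since $X\in\LBDE$). For $\overline\alpha\le\alpha\le\beta$ both $\alpha,\beta\in{\sf d}(X)$, and equating the two representations of $B(X(\eta_\alpha),(\xi_\alpha))$ obtained above (one with index $\beta$, one with index $\alpha$), using $\Pi_\alpha=V_{\alpha\beta}\circ\Pi_\beta$ and $V_{\alpha\beta}=U_{\beta\alpha}^*$, yields $\ip{\xi_\beta}{X_\beta\eta_\beta}_\beta=\ip{\xi_\beta}{U_{\beta\alpha}X_\alpha V_{\alpha\beta}\eta_\beta}_\beta$ on the dense subspace $\Pi_\beta(\D)$ of $\H_\beta$, hence $X_\beta=U_{\beta\alpha}X_\alpha V_{\alpha\beta}$ for $\overline\alpha\le\alpha\le\beta$. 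Setting $X_\beta=0$ for $\beta\notin{\sf d}(X)$, Proposition~\ref{prop_operators1} applies with this $\overline\alpha$, so $Y:=\varinjlim X_\gamma\in\LBDE$ exists and, by (I$_4$) of Theorem~\ref{thm_main}, satisfies $B(Y(\eta_\alpha),(\xi_\alpha))=\ip{\Pi_\delta(\xi_\alpha)}{X_\delta\Pi_\delta(\eta_\alpha)}_\delta$ for every $\delta\ge\overline\alpha$; the right-hand side also equals $B(X(\eta_\alpha),(\xi_\alpha))$ for those $\delta$, so $B(X(\cdot),\cdot)$ and $B(Y(\cdot),\cdot)$ agree on $\D\times\D$ and $X=Y$ because the duality between $\D^\times$ and $\D$ is separating (Lemma~\ref{lemma 3.1}(i)).

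The computations are routine. The two places that need genuine care are the well-definedness of $\Phi$ before extension --- which relies on $\Pi_\gamma$ being injective, not merely having dense range --- and the density argument upgrading $X_\beta=U_{\beta\alpha}X_\alpha V_{\alpha\beta}$ from $\Pi_\beta(\D)$ to all of $\H_\beta$; I expect the latter to be the only real (and still minor) obstacle. Everything else reduces to the Riesz lemma and to Proposition~\ref{prop_operators1}.
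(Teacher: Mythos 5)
Your proof is correct and follows essentially the same route as the paper: define the sesquilinear form from $B(X(\cdot),\cdot)$, obtain $X_\gamma$ from the representation theorem for bounded forms, and verify $X=\varinjlim X_\gamma$ via Proposition~\ref{prop_operators1} and the separating duality. You are in fact slightly more careful than the paper at one point: the paper asserts that every $\xi_\gamma\in\H_\gamma$ appears in a family of $\D$ (which would require surjectivity of $\Pi_\gamma$), whereas you correctly define the form only on the dense subspace $\Pi_\gamma(\D)$ and extend by continuity, and you spell out the compatibility relation $X_\beta=U_{\beta\alpha}X_\alpha V_{\alpha\beta}$ that the paper leaves as ``easily checked.''
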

\begin{proof} The existence of $X_\gamma$ follows immediately from the representation theorem of bounded sesquilinear forms, once one has defined
$$ F_X^\gamma(\xi_\gamma, \eta_\gamma):= B(X(\eta_\alpha), (\xi_\alpha)), \quad \xi_\gamma, \eta_\gamma \in \H_\gamma.$$
The sesquilinear form $F_X^\gamma$ is well defined since every $\xi_\gamma\in \H_\gamma$ ``appears'' in one and only one family $(\xi_\alpha)\in \D$. By \eqref{eq_ineduality}, $F_X^\gamma$ is bounded. Hence there exists $X_\gamma \in
\BB(\H_\gamma)$ such that
$$F_X^\gamma(\xi_\gamma, \eta_\gamma)= \ip{\xi_\gamma}{X_\gamma\eta_\gamma}_\gamma, \;\forall \xi_\gamma, \eta_\gamma \in \H_\gamma.$$

The fact that $X=\varinjlim X_\gamma$ is easily checked.
\end{proof}

The set $\LBDD{}$ has an obvious structure of vector space. Moreover it carries an involution $X \to X^\dagger$, defined by the equality
$$ B(X^\dagger(\eta_\alpha), (\xi_\alpha))= \overline{B(X(\xi_\alpha), (\eta_\alpha) )}, \quad (\xi_\alpha), (\eta_\alpha) \in \D.$$
It is easily seen that, if $X=\varinjlim X_\gamma$, then $X^\dagger= \varinjlim X_\gamma^*$, where $X_\gamma^*$ denotes the adjoint of $X_\gamma$ in the Hilbert space $\H_\gamma$.

\berem A consequence of the Proposition \ref{prop_operators1} is the
existence, for every $\alpha \in \F$, of an injective linear map
$\Phi_\alpha: \BB(\H_\alpha) \to \LBDD{}$, preserving the
involutions for which there exists $\overline{\alpha}\in\F$ such
that
$$ \Phi_\alpha (X_\alpha)= \Phi_\beta (U_{\beta\alpha}X_\alpha V_{\alpha\beta}), \; \overline{\alpha}\leq\alpha\leq \beta.$$ The map $\Phi_\alpha$ associates to $X_\alpha \in \BB(\H_\alpha)$ the unique operator $X\in \LBDD{}$ which ``reduces'' to $X_\alpha$ on $\H_\alpha$.
\enrem

Since operators of $\BB(\H_\alpha)$ can be multiplied by each other, it is natural to pose the question if this multiplication may be used to define a (at least, partial) multiplication in $\LBDD{}$ or, in other words, if $\LBDD{}$ can be made into a partial *-algebra in the sense of \cite{pop-book}.

\bedefi In $\LBDD{}$ a partial multiplication $X\cdot Y$ of $X,Y \in \LBDD{}$, with $X= \varinjlim X_\beta$ and $Y= \varinjlim Y_\beta$, is defined by the conditions:
\begin{align*}\label{_defn_multiplication}
&\exists \gamma\in \F:\,\,X_\beta Y_\beta =
U_{\beta\alpha}X_{\alpha}V_{\alpha\beta}U_{\beta\alpha}Y_{\alpha}V_{\alpha\beta}, \; \forall\beta,\alpha\geq\gamma \\
& X\cdot Y= \varinjlim X_\beta Y_\beta
\end{align*}
\findefi
A rather simple check then shows that
\begin{prop}\label{prop_pa}
$\LBDD{}$ is a partial *-algebra with respect to the usual operations and the multiplication defined above.
\end{prop}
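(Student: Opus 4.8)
The plan is to verify the axioms of a partial *-algebra in the sense of \cite{pop-book} for $\LBDD{}$ equipped with the vector space structure, the involution $X \to X^\dagger$, and the partial multiplication defined above. First I would fix the notion of compatibility: declare $X$ a left multiplier of $Y$ (equivalently $Y$ a right multiplier of $X$) precisely when the first condition in the definition holds, i.e. when there exists $\gamma \in \F$ such that $X_\beta Y_\beta = U_{\beta\alpha}X_\alpha V_{\alpha\beta}U_{\beta\alpha}Y_\alpha V_{\alpha\beta}$ for all $\beta,\alpha\geq\gamma$, where $X=\varinjlim X_\beta$, $Y=\varinjlim Y_\beta$. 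One first checks this relation is well posed: by Proposition \ref{prop_operators2} the families $\{X_\beta\}$, $\{Y_\beta\}$ are determined (up to the choice of $\overline\alpha$) by $X$, $Y$, so the compatibility condition and the resulting product do not depend on representatives; and by Proposition \ref{prop_operators1} the family $\{X_\beta Y_\beta\}_{\beta\geq\gamma}$ (extended by $0$ below $\gamma$) does define an element of $\LBDD{}$, namely $X\cdot Y=\varinjlim X_\beta Y_\beta$, so the product lands in $\LBDD{}$ as required.

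Next I would run through the partial *-algebra axioms one by one. (1) Compatibility is reflexive: $X$ is always a multiplier of itself, since $X_\beta X_\beta = U_{\beta\alpha}X_\alpha V_{\alpha\beta}U_{\beta\alpha}X_\alpha V_{\alpha\beta}$ holds for $\beta,\alpha\geq\overline\alpha$ by the defining relation $X_\beta = U_{\beta\alpha}X_\alpha V_{\alpha\beta}$ of Proposition \ref{prop_operators1}. (2) The involution reverses compatibility: if $X$ is a left multiplier of $Y$ then $Y^\dagger$ is a left multiplier of $X^\dagger$, and $(X\cdot Y)^\dagger = Y^\dagger\cdot X^\dagger$; this follows from the already-observed identity $X^\dagger=\varinjlim X_\gamma^*$ together with $(X_\beta Y_\beta)^* = Y_\beta^* X_\beta^*$ in $\BB(\H_\beta)$ and the fact that taking Hilbert-space adjoints commutes, for each fixed $\beta$, with conjugation by the $U$'s and $V$'s (using $V_{\alpha\beta}=U_{\beta\alpha}^*$, so $(U_{\beta\alpha}X_\alpha V_{\alpha\beta})^* = U_{\beta\alpha}X_\alpha^* V_{\alpha\beta}$). (3) Bilinearity of the partial product: if $X$ is a left multiplier of both $Y$ and $Z$ then it is a left multiplier of $\lambda Y+\mu Z$ and $X\cdot(\lambda Y+\mu Z)=\lambda(X\cdot Y)+\mu(X\cdot Z)$; here one intersects the finitely many cofinal index sets supplied by the hypotheses — $\F$ being directed, a common $\gamma$ above all of them exists — and uses linearity of $X_\beta(\cdot)$ on $\BB(\H_\beta)$ and linearity of $\varinjlim$. (4) The unit: the operator $I = \varinjlim I_\alpha$ (with $I_\alpha$ the identity of $\H_\alpha$) lies in $\LBDD{}$, is a multiplier of every $X$, and satisfies $I\cdot X = X\cdot I = X$; indeed $I_\alpha$ is compatible with $X_\alpha$ for trivial reasons and $I_\beta X_\beta = X_\beta I_\beta = X_\beta$.

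The subtle axiom — and the one I expect to be the main obstacle — is the associativity-type condition: if $X$ is a left multiplier of $Y$, and $X\cdot Y$ is a left multiplier of $Z$, and $Y$ is a left multiplier of $Z$, and $X$ is a left multiplier of $Y\cdot Z$, then $(X\cdot Y)\cdot Z = X\cdot(Y\cdot Z)$. The reason this is delicate is that each compatibility hypothesis only guarantees the relevant identity \emph{eventually} (for indices above some $\gamma_i$), and one must check that on the common tail $\beta,\alpha\geq\gamma := \sup_i\gamma_i$ the Hilbert-space products genuinely associate. But once all four families are reduced to a single cofinal set of indices via directedness of $\F$, on that tail we simply have honest bounded operators $X_\beta, Y_\beta, Z_\beta \in \BB(\H_\beta)$ with $(X_\beta Y_\beta)Z_\beta = X_\beta(Y_\beta Z_\beta)$ by associativity of composition in $\BB(\H_\beta)$, and both $(X\cdot Y)\cdot Z$ and $X\cdot(Y\cdot Z)$ are, by definition and by the uniqueness in Proposition \ref{prop_operators1}, the inductive limit of this common family $\{X_\beta Y_\beta Z_\beta\}$. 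Hence the two sides coincide. The weak-associativity variants required by \cite{pop-book} (e.g. $X\cdot(Y\cdot Z) = (X\cdot Y)\cdot Z$ whenever one side is defined under the appropriate multiplier hypotheses) are handled identically. Assembling these verifications establishes that $\LBDD{}$ is a partial *-algebra, which is the content of Proposition \ref{prop_pa}.
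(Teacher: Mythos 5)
The paper itself gives no argument here beyond the phrase ``a rather simple check,'' so your write-up can only be judged on its own terms; most of it (the involution axiom via $(U_{\beta\alpha}X_\alpha V_{\alpha\beta})^* = U_{\beta\alpha}X_\alpha^* V_{\alpha\beta}$, and distributivity obtained by passing to a common cofinal tail of indices) is the right computation. But two of your steps are genuinely wrong. The clearest is the unit. You claim $I=\varinjlim I_\alpha$ lies in $\LBDD{}$, but Proposition \ref{prop_operators1} only builds an inductive limit from a family satisfying the coherence relation $X_\beta=U_{\beta\alpha}X_\alpha V_{\alpha\beta}$ eventually; for $X_\alpha=I_\alpha$ this reads $I_\beta=U_{\beta\alpha}V_{\alpha\beta}=U_{\beta\alpha}U_{\beta\alpha}^*$, which holds exactly when the $V_{\alpha\beta}$ are isometries --- the condition of Proposition \ref{prop_isometr}, which the paper explicitly describes as too strong and not fulfilled in typical examples. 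So in general no such $I$ exists (correspondingly, the candidate identities $\btheta_\alpha\circ\Pi_\alpha$ genuinely depend on $\alpha$). A unit is not part of the definition of a partial *-algebra in \cite{pop-book}, so the claim should be dropped rather than repaired; the same applies to your associativity discussion, since partial *-algebras are not assumed associative --- what you prove there is a bonus, not an axiom.

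The second problem is structural. You read the compatibility condition literally and deduce that it is reflexive ``by the defining relation $X_\beta=U_{\beta\alpha}X_\alpha V_{\alpha\beta}$.'' By the same token the displayed condition $X_\beta Y_\beta=U_{\beta\alpha}X_\alpha V_{\alpha\beta}U_{\beta\alpha}Y_\alpha V_{\alpha\beta}$ is satisfied by \emph{every} pair, so the multiplication would be total --- and then your appeal to Proposition \ref{prop_operators1} to conclude that $\varinjlim X_\beta Y_\beta$ exists in $\LBDD{}$ fails, because that proposition requires the family $\{X_\beta Y_\beta\}$ itself to be coherent, i.e. $X_\beta Y_\beta=U_{\beta\alpha}(X_\alpha Y_\alpha)V_{\alpha\beta}$, which is a nontrivial extra condition ($V_{\alpha\beta}U_{\beta\alpha}\neq I_\alpha$ in general). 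The definition of the partial product only makes sense if the compatibility condition is read as this coherence requirement on the products; under that reading your use of Proposition \ref{prop_operators1} is legitimate, but reflexivity of compatibility is false in general, so steps (1) and (4) of your argument must go. The surviving content --- stability of the multiplier relation under the involution with $(X\cdot Y)^\dagger=Y^\dagger\cdot X^\dagger$, and linearity of the set of right multipliers of a fixed $X$ --- does hold by exactly the computations you give, and that is all the definition of a partial *-algebra actually demands.
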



\begin{thebibliography}{99}
\bibitem{at_pip} J-P.~Antoine  and C.~Trapani,
\emph{Partial Inner Product Spaces --- Theory and Applications},
Springer Lecture Notes in Mathematics, vol. 1986, Berlin, Heidelberg, 2009.
\bibitem{antgross1} {J-P.~Antoine and A.~Grossmann}, {\em Partial inner product spaces. I. General properties},
{J. Funct. Analysis}, \textbf{23} (1976),  369--378.
\bibitem{antgross2}J-P.~Antoine and A.~Grossmann,  {\em Partial inner product spaces. II. Operators},
{J. Funct. Analysis}, \textbf{23} (1976,) 379--391.
\bibitem{pop-book}
J-P.~Antoine, A.~Inoue and C.~Trapani,
\emph{Partial *-Algebras and Their Operator Realizations},
Mathematics and Its Applications, vol. 553, Kluwer, Dordrecht, NL, 2002.
\bibitem{FL1} M. Friedrich and G. Lassner,
{\em Angereicherte Hilbertr\"{a}ume, die zu Operatorenalgebren assoziiert sind},
{Wiss. Z. KMU, Leipzig, Math.-Nat.R.} {\bf 27} (1978), 245--251.
\bibitem{FL2} M. Friedrich and G. Lassner,
{\em Rigged Hilbert spaces and topology on operator algebras},
preprint, JINR, Dubna (1979).
\bibitem{gross} {A.~Grossmann}, {\em Elementary properties of nested Hilbert spaces},
{Commun. Math. Phys.} \textbf{2} (1965), 1--30.
\bibitem{kadison} R.V.~Kadison and J.R.~Ringrose, {\em Fundamentals of the theory of operator algebras}, Vol. II, Academic Press, New York, 1986.
\bibitem{koethe} G. K\"{o}the,
{\em Topological Vector Spaces, Vols. I \& II\/},
Springer-Verlag, Berlin, 1979.
\bibitem{kuersten} K-D.~K\"{u}rsten, {\em On algebraic properties of partial algebras},
{Rend. Circ. Mat. Palermo, Ser.II}, Suppl. {\bf 56} (1998), 111--122.
\bibitem{scaefer}H. H.  Schaefer,
{\em Topological Vector Spaces\/},
Springer-Verlag, Berlin, 1971.
\bibitem{schmud} K. Schm\"udgen,
{\em Unbounded Operator Algebras and Representation Theory\/},
Akademie-Verlag, Berlin, 1990.
\bibitem{tratschinke1}C.~Trapani and F.~Tschinke,
{\em Partial multiplication of operators in rigged Hilbert spaces},
{Integral Equations Operator Theory}  {\bf 51}  (2005),   583--600.
\bibitem{tratshinke2} C.~Trapani and F.~Tschinke,
{\em Partial *-algebras of distributions}, {Publ. Res. Inst. Math. Sci., Kyoto Univ.} {\bf 41} (2005),  259--279.
\bibitem{vEinKru} S. J. L. van Eijndhoven and P. Kruszynski, {\em Spectral trajectories, duality and inductive-projective limits of Hilbert spaces}
Studia Mathematica {\bf 90}(1988) 45--60
\end{thebibliography}
\end{document}